\newtheorem{theorem}{Theorem}[section]
\newtheorem{proposition}[theorem]{Proposition}
\newtheorem{remark}[theorem]{Remark}
\newtheorem{assumption}[theorem]{Assumption}
 \newcommand{\R}{\mathbb R}
\newcommand{\PP}{\mathbb P}
\newcommand{\normal}{\mathbf{N}}
 \newcommand{\KK} {\mathcal{K}}
 \newcommand{\diag}{\mathop{\mathrm{diag}}}
\newcommand{\bbf}{{\mathbf {f}}}
\newcommand{\bn}{{\mathbf {n}}}
\newcommand{\Bf}{\mathbf{f}}
\newcommand{\dpar}[2]{\dfrac{\partial #1}{\partial #2}}
\newcommand{\bu}{\mathbf{u}}
\newcommand{\bv}{\mathbf{v}}
\newcommand{\BV}{\underline{\mathbf{V}}}
\DeclareMathAlphabet{\mathpzc}{OT1}{pzc}{m}{it}
\newcommand{\bV}{\mathbf{\mathpzc{V}}}
\newcommand{\bF}{\mathbf{f}}
\newcommand{\bbF}{\mathbf{\mathcal{F}}}
\newcommand{\bbu}{\mathbf{u}}
\newcommand{\bbg}{\mathbf{g}}
\newcommand{\hbbg}{\hat{\mathbf{g}}}
\newcommand{\hbbf}{\hat{\mathbf{f}}}
\newcommand{\bx}{\mathbf{x}}
\newcommand{\by}{\mathbf{y}}
\newcommand{\red}[1]{{#1}}
\newcommand{\Kp}{{|K}}
\newcommand{\Km}{{|K^-}}
\begin{document}
\title{A general framework to construct schemes satisfying additional conservation relations.\\
Application to entropy conservative and entropy dissipative schemes}
\author{R. Abgrall\\
Institute of Mathematics,
University of Zurich, Switzerland}
\date{\today}
\maketitle

\begin{abstract}
We are interested in the approximation of a steady hyperbolic problem. In some cases, the solution can satisfy an additional conservation relation, at least when it is smooth. This is the case of an entropy.  In this paper, we show, starting from the discretisation of the original PDE, how to construct a scheme that is consistent with the original PDE and the additional conservation relation. Since one interesting example is given by the systems endowed by an entropy, we provide one explicit solution, and show that the accuracy of the new scheme is at most degraded by one order. In the case of a discontinuous Galerkin scheme and a Residual distribution scheme, we show how not to degrade the accuracy. This improves the recent results obtained  in \cite{ShuEntropy,Mishra,Gassner,Zingg} in the sense that no  particular constraints are set on quadrature formula and that a priori maximum accuracy can still be achieved. We study the behaviour of the method on a non linear scalar problem. However, the method is not restricted to scalar problems.
\end{abstract}
\section{Introduction}
In this paper, we are interested in the approximation of non-linear hyperbolic problems.
 To make things more precise, our target are the  Euler equations in the compressible regime, other examples are the MHD equations. The case of parabolic problems in which the elliptic terms play an important role only in some area of the computational domain, such as the Navier-Stokes equations in the compressible regime, or the resistive MHD equations, can be dealt with in a similar way. 
 
Let $\mathcal{D}$ an open subset of $\R^p$ and $d$\; $C^1$ functions $\bbf_j, j=1,\ldots, d$ defined on $\mathcal{D}$. We consider the hyperbolic problem, with $\bu:\Omega\subset\R^d\rightarrow \mathcal{D}\subset \R^p$, defined by 
\begin{subequations}\label{eq1}
\begin{equation}
\label{eq1:1}
\text{ div }\bbf(\bu)=\sum\limits_{j=1}^d\dpar{\bbf_j}{x_j}(\bbu)=0
\end{equation}
subjected to
\begin{equation}\label{eq1:2}
\red{\bigg( \nabla_\bu \bbf(\bu)\cdot \bn(\bx),0\bigg)^-}(\bu-\bu_b)=0 \text{ on }\partial\Omega.
\end{equation}
In  \eqref{eq1:2}, $\bn(\bx)$ is the outward unit vector at $\bx\in \partial \Omega$ and $\bu_b$ is a  regular enough function.
{We  have also used standard notations: if $A$ is a $p\times p$ diagonalisable matrix in $\R$, $A=L\Lambda R$ where $L$ and $R$ are two $p\times p$ matrices with $LR=\text{Id}_{p\times p}$ and $\Lambda=\diag (\lambda_i)_{i=1, \ldots, p}$ diagonal, then
$$A^\pm=L\Lambda^\pm R \text{ where } \Lambda^\pm=\diag (\lambda_i^\pm)_{i=1, \ldots, p}$$
and, for $x\in \R$, $x^+=\max(x, 0)$ and $x^-=\min(x,0)$.}
\end{subequations}
  The weak formulation of \eqref{eq1} is: $\bu\in L^2(\Omega)^d\cap L^\infty(\Omega)^d$ is a weak solution of \eqref{eq1} if $\bbu\in \mathcal{D}$ and for any $\varphi\in C^1_0(\Omega)^d$, 
\begin{equation}
\label{weak:eq1}
-\int_\Omega \nabla \varphi \cdot \bbf(\bu)\; d\bx+\int_{\partial \Omega} \varphi \big ( \mathbf{\mathcal{F}}_\bn(\bu,\bu_b)-\bbf(\bu)\cdot\bn\big ) \; d\gamma=0
\end{equation}
where $\mathbf{\mathcal{F}}_\bn$ is a flux that is almost everywhere the upwind flux:
$$\mathbf{\mathcal{F}}_\bn(\bu,\bu_g)=\big( \nabla_\bu \bbf(\bu)\cdot \bn(\bx)\big)^+ \bbu+\big( \nabla_\bu \bbf(\bu)\cdot \bn(\bx)\big)^-\bbu_g$$

 In general, schemes are constructed starting from  the conservation relation. In some cases, such as when the solution is smooth enough, it appears that the \red{ solution of \eqref{eq1} also satisfies}  an additional conservation relation. There is no reason that the initial discretisation will also be consistent with the new conservation relation.  A typical example is the entropy.
If $U$, defined on $\mathcal{D}$, is a convex and $C^1$ function and if there exists $g$, a $C^1$ function, $g=(g_1, \ldots, g_d):\mathcal{D}\rightarrow \R^d$,  such that: for all $j=1, \ldots, d$,
\begin{subequations}
\begin{equation}\label{edp:entropy:1}\nabla_\bu U \cdot \nabla_\bu \bbf_j=\nabla_\bu g_j,\end{equation}
then when $\bu$ is smooth we get the additional conservation relation
\begin{equation}\label{edp:entropy:2}\text{ div }\bbg(\bu)=\sum\limits_{j=1}^d\dpar{g_j}{x_j}(\bu)=0.
\end{equation}
\end{subequations}
An entropy solution of \eqref{eq1} will satisfy $\text{ div }\bbg(\bu)\leq 0$
in the sense of distributions. 

Hence a rather natural question is: given a scheme for \eqref{eq1}, how can we modify it so that the modified scheme is consistent with an additional conservation relation, i.e. in the case of an entropy, how to modify it so that it is also compatible with an entropy inequality? This is not a new question: there has been lots of work on that particular question, see \cite{TadmorB,TadmorVieux,TadmorActa,JiangShu,bouchut,LeFlochMercier}, and more recently see \cite{HouLiu,Ismail,Zingg,Gassner,Mishra,Guermond,ShuEntropy,Oefner,Carpenter1,Nordstrom1,hiltebrand14:_entrop_galer,svard,zbMATH06599705,zbMATH06799652,GASSNER2016291,chandrasekar} and references therein for an un-complete list of contributions. One of the salient aspects \red{of research } on this problem is a focus on the discrete version of the schemes: \red{for example, when  some integral appear in the formulation, as in finite element method, quadrature formula are needed, and the integrals are never computed exactly}. \red{The central question is then how to translate, from a discrete point of view, the 'continuous' differential relations \eqref{edp:entropy:1} and the algebra associated to them at the discrete level.} Up to our knowledge, entropy stable schemes can be rigorously designed only for special quadrature formula. The objective of this paper is to provide a different point of view within the Residual Distribution framework. \red{In particular, we show  that we can consider general quadrature formula.}

\red{The outline of the paper is as follows.} In a first part, we recall the class of schemes (nicknamed as Residual distribution schemes or RD or RDS for short)  we are interested in, and show their link with more classical methods such as finite element ones. Then we recall  a condition that guaranties that the numerical solution will converge to a weak solution of the problem, and a second one about entropy condition, and recall a systematic way to check the formal accuracy of the scheme. In the third part, we give  a general recipe that show how to modify a scheme in order to fullfil an additional conservation law. The fourth part is devoted 
 to a discussion on the entropy conservation: we show explicitly how to modify a given scheme. The next section is devoted to the construction of entropy dissipative schemes, and we provide numerical examples. A conclusion follows.
\section{Notations}
 From now on, we assume that $\Omega$ has a polyhedric boundary in $\R^d$, $d=1,2,3$.  This simplification is by no mean essential. We denote by $\mathcal{E}_h$ the set of internal edges/faces of \red{ a triangulation} $\mathcal{T}_h$, and by $\mathcal{F}_h$ those contained in $\partial \Omega$.  $\KK$ stands either for an element $K$ or a face/edge $e\in \mathcal{E}_h\cup \mathcal{F}_h$. The boundary faces/edges are denoted by $\Gamma$.  The mesh is assumed to be shape regular, $h_K$ represents the diameter of the element $K$. Similarly, if $e\in \mathcal{E}_h\cup \mathcal{F}_h$, $h_e$ represents its diameter. \red{Throughout the text, we will restrict ourselves to the case where the elements $K$ are simplicies, mostly for simplicity reasons.}

 Throughout this paper, we follow Ciarlet's definition \cite{ciarlet,ErnGuermond} of a finite element approximation: \red{for any element $K$,  we have a set of linear forms  $\Sigma_K$  acting on the set $\PP^k$ of polynomials of degree $k$ such that the linear mapping}
 $$q\in \PP^k\mapsto \big (\sigma_1(q), \ldots, \sigma_{|\Sigma_K|}(q)\big )$$
 is one-to-one. The space $\PP^k$ is spanned by the basis function $\{\varphi_{\sigma}\}_{\sigma\in \Sigma_K}$  defined by
 $$\forall \sigma,\, \sigma',  \sigma(\varphi_{\sigma'})=\delta_\sigma^{\sigma'}.$$
  {We have in mind either the Lagrange interpolation where the degrees of freedom are associated to \red{the Lagrange  points} in $K$, or other type of polynomials approximation such as using B\'ezier polynomials where we will also do the same geometrical identification. More specifically, if $\{\mathbf{A}_1, \ldots, \mathbf{A}_{d+1}\}$ are the vertices of $K$ (i.e $K$ is the convex hull of the vertices $\{\mathbf{A}_1, \ldots, \mathbf{A}_{d+1}\}$), the Lagrange points of degree $k$ are defined by their barycentric coordinates 
    \begin{equation}
    \label{lagrange}L_\alpha=\sum_{l=1}^{d+1} \frac{i_l}{d+1}\mathbf{A}_l, \quad \alpha=(i_1, \ldots, i_{d+1})
    \end{equation}
  with $i_l$ positive integers such that $i_1+\ldots +i_{d+1}=k$

Any point $\mathbf{M}$ of $\R^d$ can be uniquely represented by its barycentric coordinates $\{\lambda_1(\mathbf{M}), \ldots, \lambda_{d+1}(\mathbf{M})\}$ with
  $$\mathbf{M}=\sum_{l=1}^{d+1} \lambda_l(\mathbf{M}) \mathbf{A}_l, \qquad \sum_{l=1}^{d+1}\lambda_l(\mathbf{M})=1.$$
  Note that $\lambda_l(\mathbf{M})\geq 0$ if and only if $\mathbf{M}\in K$.
  The B\'ezier polynomials of degree $k$,  associated to the multi-index $\alpha=(i_1, \cdots, i_{d+1})$, $i_l$ positive integer, $i_1+\ldots +i_{d+1}=k$  as in \eqref{lagrange},
  are the polynomials 
  $$B_\alpha=\dfrac{k!}{i_1! \cdots  i_{d+1}!} \lambda_1^{i_1}\ldots \lambda_{d+1}^{i_{d+1}}.$$
  They sum up to unity, are positive on $K$, are interpolatory on the vertices only, span $\PP^k$ as well as the Lagrange polynomials of degree $k$. For the Lagrange interpolation of degree $k$, the set of degrees of freedom is the set of Lagrange points of degree $k$, i.e. the $L_\alpha$ where $\alpha$ is as in \eqref{lagrange}. For the B\'ezier approximation, we make the geometrical identification between the muti-index $\alpha$ that defines the polynomial $B_\alpha$ and the Lagrange point $L_\alpha$, though the B\'ezier polynomials are not interpolatory in general. However, the B\'ezier polynomials of degree $k$ span $\PP_k$, as well as the Lagrange polynomials of degree $k$.}
  
 Considering all the elements covering $\Omega$, the set of degrees of freedom is denoted by $\mathcal{S}$ and a generic degree of freedom  by $\sigma$. We note that for any $K$, 
 $$\forall \bx\in K, \quad \sum\limits_{\sigma\in K}\varphi_\sigma(\bx)=1.$$
 For any element $K$, $\#K$ is the number of degrees of freedom in $K$. If $\Gamma$ is a face or a boundary element, $\#\Gamma$ is also the number of degrees of freedom in $\Gamma$.

  The solution is sought for in the space  $V_h^k$  (or $V_h$ for short since the degree $k$ will be assumed to be constant) defined by, setting
$$\mathcal{V}_h=\bigoplus_K\{ \bv\in L^2(K), \bv_{|K}\in \PP^k\}.$$
We will consider two cases $V_h=\mathcal{V}_h\cap C^0(\Omega)$ in which case the triangulation needs to be conformal, and $V_h=\mathcal{V}_h$ where the continuity requirement is released: the triangulation does not need to be conformal anymore.

Throughout the text, we need to integrate functions. This is done via quadrature formula, and the symbol $\oint$ used in volume integrals
\begin{equation}\label{quad:K}
\oint_K v(\bx)\; d\bx:=|K|\sum_{q}\omega^K_q v(\bx_q^K)
\end{equation}
or boundary/face integrals
\begin{equation}\label{quad:f}
\oint_{\partial K} v(\bx)\; d\gamma:=|f|\sum_{q}\omega^f_q v(\bx_q^f)
\end{equation}
means that these integrals are done via user defined numerical quadratures.

 If $e\in \mathcal{E}_h$, represents any  \emph{internal} edge, i.e. $e\subset K\cap K^+$ for two elements $K$ and $K^+$,  we define for any function $\psi$ the jump  $[\nabla \psi ]=\nabla \psi_{|K}-\nabla \psi_{| K^+}$.
 Similarly, $\{\bv\}=\tfrac{1}{2}\big (\bv_{|K}+\bv_{|K^+}\big )$.
 
{ If $\mathbf{x}$ and $\mathbf{y}$ are two vectors of $\R^q$, for $q$ integer, $\langle \bx,\by\rangle$ is their scalar product. In some occasions, it can also be denoted as $\bx\cdot\by$ or $\bx^T\by$. Similarly, if $\bx=(x_1, \ldots, x_d)$ is a vector and $\bf=(\bf_1, \ldots, \bf_d)$ with $\bf_j\in \R^m$, then 
 we denote
 $$\bx\cdot \bbf=\bx^T\bbf=\sum_{i=1}^d x_i\bbf_i.$$
 
 Last, $v^h\in V^h$ will denote any test function, $\bu$ will be the conserved quantities defined  in \eqref{eq1},  $\bV$ will be the entropy variable and $\bV^h\in V^h$ will be its interpolant in $\PP^k$. Depending on the context, $\bu^h\in V^h$ will denote either an approximation of $\bu$ or $\bu(\bV^h)$. Similarly, $\pi_h(\bu)\in V^h$ is the interpolant of $\bu$ or $\bu(\pi_h(\bV))$, depending on the context.}

     \section{Schemes, conservation, entropy dissipation}
\subsection{Schemes}
{ In order to integrate the steady version of \eqref{eq1} on a domain $\Omega\subset \R^d$ with the boundary conditions \eqref{eq1:2}, on each element $K$ and any degree of freedom $\sigma\in \mathcal{S}$ belonging to $K$,  we define residuals $\Phi_\sigma^K(\bu^h)$. Following \cite{abgrallLarat,abgralldeSantisSISC}, they are assumed to satisfy the following conservation relations:
For any element $K$,  and any $\bu^h\in V^h$,
\begin{subequations}
\label{conservation}
\begin{equation}
\label{conservation:K}
\sum\limits_{\sigma\in K}\Phi_\sigma^K(\bu^h)=\oint_{\partial K}\hbbf_\bn(\bu^h_\Kp,\red{\bu^{h}_\Km})\; d\gamma.
\end{equation}
\red{where $\red{u_\Kp}$ is the restriction of $\bu^h$ in the element $K$\, while \red{$\bu^{h}_\Km$} is the restriction of $\bu^h$  on the other side of the local edge/face of $K$. In addition, $\hat{\bbf}_\bn$ is a consistant numerical flux, i.e. $\hbbf(\bu,\bu)=\bbf(\bu)\cdot\bn$. }
Similarly, we consider residuals on the boundary elements $\Gamma$. On any such $\Gamma$, for any degree of freedom $\sigma\in \mathcal{S}\cap \Gamma$, we consider boundary residuals $\Phi_\sigma^\Gamma(u^h)$ that will satisfy the conservation relation
\begin{equation}
\label{conservation:Gamma}
\sum\limits_{\sigma\in \Gamma}\Phi_\sigma^\Gamma(\bu_h)=\oint_{\Gamma}\big ( \hat{\bbf}_\bn(\bu^h_{|\Gamma}, \bu_b)-\bF(\bu^h)\cdot \bn\big ) \; d\gamma
\end{equation}
\end{subequations}
\red{where, for $\bx\in \Gamma$,  $\bu^h_{|\Gamma}(\bx)=\lim\limits_{\by\rightarrow \bx, \by\in \Omega}\bu^h(\by)$.}
Once this is done, the discretisation of \eqref{eq1} is achieved via: for any $\sigma\in \mathcal {S}$,
\begin{equation}
\label{RD:scheme}
\sum\limits_{K\subset\Omega, \sigma\in K}\Phi_\sigma^K(\bu^h)+\sum\limits_{\Gamma\subset\partial\Omega, \sigma\in \Gamma}\Phi_\sigma^\Gamma(\bu^h)=0.
\end{equation}
In \eqref{RD:scheme}, the first term represents the contribution of the internal elements. The second exists if $\sigma\in \Gamma$ and it represents the contribution of the boundary conditions.

In fact, the formulation \eqref{RD:scheme} is very natural. Consider a variational formulation of the steady version of \eqref{eq1}:
$$\text{ find } \bu^h\in V^h \text{ such that for any } \mathbf{v}^h\in V^h,  a(\bu^h,\bv^h)=0.$$
Let us show on  three examples that this variational formulation leads to \eqref{RD:scheme}. They are
\begin{itemize}
\item The SUPG \cite{Hughes1} variational formulation,  with $\bu^h, \bv^h\in V^h=\mathcal{V}^h\cap C^0(\Omega)$:
\begin{equation}
\label{SUPG:var}
\begin{split}
a(\bu^h,\bv^h)&:=-\oint_\Omega \nabla \bv^h\cdot \bF(\bu^h)\; d\bx+\sum\limits_{K\subset \Omega}h_K\oint_K\big [ \nabla \bF(\bu^h)\cdot\nabla \bv^h\big ] \; \tau_K\; \big [\nabla\bF(\bu^h)\cdot \nabla \bu^h\big ] d\bx\\
&\qquad +\oint_{\partial \Omega} \bv^h \big (\bbF_\bn(\bu^h,\bu_b)-\bF(\bu^h)\cdot\bn\big ) \;d\gamma .
\end{split}
\end{equation}
and $\tau_K>0$.
\item The Galerkin scheme with jump stabilization,  see \cite{burman} for details.  We have
\begin{equation}
\label{burman:var}
\begin{split}
a(\bu^h,\bv^h)&:=-\oint_\Omega \nabla \bv^h\cdot \bF(\bu^h)\; d\bx+\sum\limits_{e \subset \Omega}\theta_e h_K^2\oint_e \big [ \nabla \bv^h \big ]\cdot \big [ \nabla \bu^h\big ] \; d\gamma \\
&\qquad +\oint_{\partial \Omega} \bv^h \big (\bbF_\bn(\bu^h,\bu_b)-\bF(\bu^h)\cdot\bn\big ) \; d\gamma .
\end{split}
\end{equation}
 Here,  $\bu^h, \bv^h\in V^h=\mathcal{V}^h\cap C^0(\Omega)$, and $\theta_e$ is a positive parameter.
\item The discontinuous Galerkin formulation: we look for $\bu^h, \bv^h\in V^h=\mathcal{V}^h$ such that
\begin{equation}\label{DG:var}
a(\bu^h,\bv^h):=\sum\limits_{K\subset \Omega}\bigg ( -\oint_K\nabla\bv^h\cdot\bbf(\bu^h) d\bx+\oint_{\partial K}\bv^h\cdot \hat{\bbf}_\bn(\bu^{h}_\Kp,\red{\bu^{h}_\Km}) \;d\gamma \bigg ).
\end{equation}
In \eqref{DG:var}, the boundary integral is a sum of integrals on the faces of $K$, and here for any face of $K$
 $\red{\bu^{h}_\Km}$ represents the approximation of $\bu$ on the other side of that face in the case of internal elements, and $\bu_b$ when that face is on $\partial \Omega$.  In \eqref{DG:var}, $\hbbf$ is a consistent numerical flux. Note that to fully comply with \eqref{RD:scheme}, we should have defined for boundary faces $\red{\bu^{h}_\Km}=\bu^h_\Kp$, and then \eqref{DG:var} is rewritten as 
 \begin{equation}\label{DG:var:2}
 \begin{split}
a(\bu^h,\bv^h)&:=\sum\limits_{K\subset \Omega}\bigg ( -\oint_K\nabla\bv^h\cdot\bbf(\bu^h) d\bx+\oint_{\partial K}\bv^h \cdot\, \hat{\bbf}_\bn(\bu^{h}_\Kp,\red{\bu^{h}_\Kp}) \; d\gamma \bigg )\\
&\qquad  \qquad +\sum\limits_{\Gamma\subset\partial\Omega}\oint_{\Gamma}\bv^h\cdot\bigg ( \hat{\bbf}_\bn(\bu^h_\Gamma,\bu_b)-\bbf(\bu^h)\cdot \bn \bigg )\; d\gamma.
\end{split}
\end{equation}
with by abuse of language, $\hbbf=\bbF$ on the boundary edges.
\end{itemize}
In the SUPG, Galerkin scheme with jump stabilisation or the DG scheme, the boundary flux can be chosen different from $\bbF$. This can lead to boundary layers if these flux are not "enough" upwind, but we are not interested in these issues here.

Using the fact that the basis functions that span $V_h$ have a \emph{compact} support, then each scheme can be rewritten in the form\eqref{RD:scheme} with the following expression for the residuals:
    \begin{itemize}
    \item For the  SUPG scheme \eqref{SUPG:var}, the  residual are defined by
    \begin{equation}\label{SUPG}\Phi_\sigma^K(\bu^h)=\oint_{\partial K}\varphi_\sigma \bF(\bu^h)\cdot \bn \; d\gamma -\oint_K \nabla \varphi_\sigma\cdot \bF(\bu^h) \; d\bx+h_K
    \oint_K \bigg (\nabla_\bu\bF(\bu^h)\cdot \nabla \varphi_\sigma \bigg )\tau_K \bigg (\nabla_\bu\bF(\bu^h)\cdot \nabla \bu^h \bigg )\;d\bx\end{equation}
    with $\tau_K>0$.
    \item For the Galerkin scheme with jump stabilization \eqref{burman:var}, the residuals are defined  by:  
        \begin{equation}\label{burman}\Phi_\sigma^K(\bu^h)=\oint_{\partial K}\varphi_\sigma \bF(\bu^h)\cdot \bn\; d\gamma -\oint_K \nabla \varphi_\sigma\cdot \bF(\bu^h)\; d\bx +
    \sum\limits_{e \text{ faces of }K} \theta_e\; h_e^2 \oint_{\partial K} [\nabla \bu]\cdot [\nabla \varphi_\sigma]\; d\gamma\end{equation}
    with $\theta_e>0$.
    Here, since the mesh is conformal, any internal edge $e$ (or face in 3D) is the intersection of the element $K$ and an other element denoted by $K^+$.
    \item For the discontinuous Galerkin scheme,
    \begin{equation}\label{DG}
    \Phi_\sigma^K(\bu^h)=-\oint_K\nabla\varphi_\sigma\cdot\bbf(\bu^h) d\bx+\oint_{\partial K}\varphi_\sigma\cdot \hat{\bbf}_\bn(\bu^{h}_\Kp,\red{\bu^{h}_\Km}) \; d\gamma
    \end{equation}
    using the second definition of $\red{\bu_\Km^{h}}$.
    \item The boundary residuals are 
  \begin{equation}
  \label{boundary}
  \Phi_\sigma^\Gamma(\bu^h)=  \oint_{\Gamma}\varphi_\sigma\big ( \hat{\bbf}_\bn(\bu^h_\Gamma, \bu_b)-\bF(\bu^h)\cdot \bn\big )\; d\gamma
  \end{equation}
  \end{itemize}
All these residuals satisfy the relevant conservation relations, namely \eqref{conservation:K} or \eqref{conservation:Gamma}, depending if we are dealing with element residuals or boundary residuals.

\bigskip

For now, we are just rephrasing classical finite element schemes into a purely numerical framework. However, considering  the pure numerical point of view and forgetting the variational framework, we can go further and define schemes that have no clear variational formulation. 
These are  the limited  Residual Distribution Schemes, see \cite{abgrallLarat,abgralldeSantisSISC}, namely
    \begin{equation}
    \label{schema RDS}\Phi_\sigma^K(\bu^h)=\beta_\sigma \oint_{\partial K}\bF(\bu^h)\cdot \bn\; d\gamma
    \end{equation}
    or 
    \begin{equation}
    \label{schema RDS SUPG}\Phi_\sigma^K(\bu^h)=\beta_\sigma \oint_{\partial K}\bF(\bu^h)\cdot \bn\; d\gamma+h_K\theta_S
    \oint_K \bigg (\nabla_\bu\bF(\bu^h)\cdot \nabla \varphi_\sigma \bigg )\tau_K \bigg (\nabla_\bu\bF(\bu^h)\cdot \nabla \bu^h \bigg )\;d\bx
    \end{equation}
or
\begin{equation}
\label{schema RDS jump}\Phi_\sigma^K(\bu^h)=\beta_\sigma \oint_{\partial K}\bF(\bu^h)\cdot \bn\; d\gamma+
    \theta_J \;h_K^2 \oint_{\partial K} [\nabla \bu^h]\cdot [\nabla \varphi_\sigma]\; d\gamma.\end{equation}
    or, combining the two kind of stabilisations
    \begin{equation}
\label{schema RDS jump Stream}\Phi_\sigma^K(\bu^h)=\beta_\sigma \oint_{\partial K}\bF(\bu^h)\cdot \bn\; d\gamma+
    \theta_J \;h_K^2 \oint_{\partial K} [\nabla \bu^h]\cdot [\nabla \varphi_\sigma]\; d\gamma +h_K\theta_S
    \oint_K \bigg (\nabla_\bu\bF(\bu^h)\cdot \nabla \varphi_\sigma \bigg )\tau \bigg (\nabla_\bu\bF(\bu^h)\cdot \nabla \bu^h \bigg )\;d\bx\end{equation}
   where the parameters $\beta_\sigma$ are defined to guarantee conservation,
   $$\sum\limits_{\sigma\in K} \beta_\sigma=1$$
   and such that \eqref{schema RDS SUPG} without the streamline term and \eqref{schema RDS jump} without the jump terms satisfy a discrete maximum principle. The parameters $\tau_K$, $\theta_J$ and $\theta_S$ \red{in \eqref{schema RDS SUPG}, \eqref{schema RDS jump} and \eqref{schema RDS jump Stream}} are positive real numbers. The streamline term and jump term are introduced because one can easily see that spurious modes may exist, but their role is very different compared to \eqref{SUPG} and \eqref{burman} where they are introduced to stabilize the Galerkin scheme: if formally the maximum principle is violated, experimentally the violation is extremely small, if not existant. See \cite{energie,abgrallLarat} for more details. 
   
   A similar construction can be done starting from a discontinuous Galerkin scheme without non-linear stabilisation such as limiting, has been applied. 
   
   The non-linear stability is provided by the coefficient $\beta_\sigma$ which is a non-linear function of $\bu^h$.  Possible values of $\beta_\sigma$ are described in the appendix \ref{RDS}.
   
  }
\subsection{Conservation}\label{sec:conservation}
{
It is time to state the geometrical assumptions we make on the quadrature formula: 
\begin{assumption}[Assumption on the quadrature formula]\label{assump:quad}
In \eqref{conservation:Gamma} and \eqref{conservation:K}, numerical integrations are made on the faces of the elements of $\mathcal{T}_h$, boundary of $\Omega$ included. Hence when we write, for an element $K$
$\oint_{\partial K} \psi \cdot \bn \; d\gamma$, or for a face on $\partial \Omega$ $\oint_{\Gamma} \psi \; d\gamma$, we are adding contributions  'living' on the squeleton of $\mathcal{T}_h$. We request that the quadrature formula depend only on the squeleton elements, so that if $f=K_1\cap K_2$ then
\begin{equation}
\label{quadrature}
\oint_{f\cap \partial K_1} \psi \cdot \bn\; d\gamma+\oint_{f\cap \partial  K_2} \psi \cdot \bn\; d\gamma=0.
\end{equation}
In the previous relation, the first integral is a numerical formula on $f$ seen from $K_1$, and the second term is an integral seen from $K_2$. Since the  normal  are opposite, this means that the geometrical location of the quadrature points are the same on $f$ seen from $K_1$ and $f$ seen from $K_2$. If $f$ is on the boundary of $K$, this is also the intersection of an element $K$ and the boundary of $\Omega$, and again, the quadrature points have the same geometrical locations whatever be the way we consider $f$.
\end{assumption}
In practice, this is not a restriction at all since quadrature points are defined by their barycentric coordinates and symmetric with respect to permutations of the vertices that respect orientation.

\bigskip

From \eqref{RD:scheme}, using the conservation relations \eqref{conservation:Gamma} and \eqref{conservation:K}, we obtain for any $\bv^h\in V^h$,
$$\bv_h=\sum_{\sigma \in \mathcal{S}} \bv_\sigma \varphi_\sigma,$$ the following relation:
\begin{subequations}\label{algebre2:galerkin}
\begin{equation}\label{algebre2}
\begin{split}
0&=  -\oint_\Omega \nabla \bv^h \cdot \bbf(\bu^h) \; d\bx  +\oint_{\partial \Omega}\bv^h \big (\hat{\bbf}_\bn(\bu^h,\bu_b)-\bbf(\bu^h)\cdot \bn\big ) \; d\gamma\\
&\qquad +\sum\limits_{e\in \mathcal{E}_h} \oint_e[\bv^h]\hbbf_\bn(\bu^h_\Kp,\bu^{h}_\Km)\; d\gamma+\sum\limits_{K\subset \Omega}\frac{1}{\#K}\bigg ( \sum\limits_{\sigma,\sigma'\in K} (\bv_\sigma-\bv_{\sigma'})\bigg ( \Phi_\sigma^K(\bu^h)-\Phi_\sigma^{K, Gal}(\bu^h) \bigg ) \bigg )\\
&\qquad \qquad
+\sum\limits_{\Gamma\subset \partial \Omega} \frac{1}{\#\Gamma}\bigg ( \sum\limits_{\sigma,\sigma'\in \Gamma} (\bv_\sigma-\bv_{\sigma'})(\Phi_\sigma^\Gamma \big (\bu^h,\bu_b)-\Phi_\sigma^{Gal,\Gamma}(\bu^h,\bu_b)\big )\bigg )
\end{split}
\end{equation}
where
\begin{equation}\label{galerkin}
\begin{split}\Phi_\sigma^{K, Gal}(\bu^h)&=-\oint_K\nabla\varphi_\sigma\cdot \bbf(\bu^h) \; d\bx +\oint_{\partial K} \varphi_\sigma \hbbf_\bn(\bu^h_\Kp,\bu^{h}_\Km) \; d\gamma,\\
 \Phi_\sigma^{\Gamma, Gal}(\bu^h,\bu_b)&=\oint_\Gamma \varphi_\sigma \big (\hat{\bbf}_\bn(\bu^h,\bu_b)-\bbf(\bu^h)\cdot \bn\big ) \; d\gamma.
\end{split}
\end{equation}
\end{subequations}
Note that 
$$\sum_{\sigma\in K} \Phi_\sigma^{K, Gal}(\bu^h)=\oint_{\partial K} \hbbf_\bn(\bu^h_\Kp,\bu^{h}_\Km) \; d\gamma$$
and
$$\sum_{\sigma\in \Gamma} \Phi_\sigma^{\Gamma, Gal}(\bu^h,\bu_b)=\oint \big (\hat{\bbf}_\bn(\bu^h,\bu_b)-\bbf(\bu^h)\cdot \bn\big ) \; d\gamma.$$
The proof of \eqref{algebre2} is given in appendix \ref{proof}.
The relation \eqref{algebre2} is instrumental in proving the following results.
The first one is proved in  \cite{AbgrallRoe}, and is a generalisation of the classical Lax-Wendroff theorem.
\begin{theorem}\label{th:LW}
Assume the family of meshes $\mathcal{T}=(\mathcal{T}_h)$
is shape regular. We assume that the residuals $\{\Phi_\sigma^{\mathcal{K}}\}_{\sigma\in \KK}$,
 for $\mathcal{K}$ an element or a boundary element of $\mathcal{T}_h$, satisfy: \begin{itemize}
\item For any $M\in \R^+$, there exists a constant $C$ which depends only on the family of meshes $\mathcal{T}_h$ and $M$ such that
 for any $\bu^h\in V^h$ with $||\bu^h||_{\infty}\leq M$, then
$$\big|\Phi^\KK_\sigma({\bu^h}_{|\KK})\big |\leq C\sum_{\sigma, \sigma'\in \KK}|\bu_\sigma^h-\bu_{\sigma'}^h|$$
\item The conservation relations \eqref{conservation:K} and \eqref{conservation:Gamma} with quadrature formulas satisfying assumption \ref{assump:quad}.
\end{itemize}
Then if there exists a constant $C_{max}$ such that the solutions of the scheme \eqref{RD:scheme} satisfy $||\bu^h||_{\infty}\leq C_{max}$ and a function $\bv\in L^2(\Omega)$ such that $(\bu^h)_{h}$ or at least a sub-sequence converges to $\bv$ in $L^2(\Omega)$, then $\bv$ is a weak solution of \eqref{eq1}
\end{theorem}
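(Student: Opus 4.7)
The strategy is the classical Lax--Wendroff argument, recast via the algebraic identity \eqref{algebre2}. Fix any test field $\varphi\in C^1(\overline{\Omega})^p$, set $v^h := \pi_h\varphi\in V^h$ with nodal values $v_\sigma = \varphi(\bx_\sigma)$, multiply the discrete scheme \eqref{RD:scheme} by $v_\sigma$, and sum over $\sigma\in\mathcal{S}$. Substituting \eqref{algebre2} rewrites the resulting zero as a combination of four pieces: a Galerkin volume integral $-\oint_\Omega\nabla v^h\cdot\bbf(\bu^h)\,d\bx$, a boundary numerical-flux integral, an interior-face jump contribution $\sum_e\oint_e[v^h]\hbbf_\bn\,d\gamma$, and element/boundary residual ``fluctuation'' sums involving $\Phi_\sigma^\KK-\Phi_\sigma^{\KK,Gal}$. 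The plan is to pass each of these four terms to the limit along the converging subsequence and recognise the resulting equation as exactly \eqref{weak:eq1}.

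Convergence of the volume and boundary integrals is standard: $\pi_h\varphi\to\varphi$ in $W^{1,\infty}$, $\bu^h\to\bv$ strongly in $L^2$ (hence a.e.\ along a further subsequence), $\|\bu^h\|_\infty\leq C_{max}$, and uniform continuity of $\bbf$ and $\hbbf_\bn$ on the compact set $\{|\bu|\leq C_{max}\}$ permit passage to the limit by dominated convergence; Assumption~\ref{assump:quad} (plus exactness of the quadratures on the polynomial test data) absorbs the quadrature error. The jump term vanishes identically when $V^h\subset C^0(\Omega)$; in the discontinuous case, continuity of $\varphi$ gives $[v^h]=[\pi_h\varphi-\varphi]$ of size $O(h)$ on each face, so that the sum against the bounded numerical flux over the $(d-1)$-dimensional skeleton (total measure $O(h^{-1})$) is $o(1)$.

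The main obstacle is controlling the fluctuation sum
$$
\sum_{K\subset\Omega}\frac{1}{\#K}\sum_{\sigma,\sigma'\in K}(v_\sigma-v_{\sigma'})\bigl(\Phi_\sigma^K(\bu^h)-\Phi_\sigma^{K,Gal}(\bu^h)\bigr)
$$
and its boundary analogue. Smoothness of $\varphi$ supplies $|v_\sigma-v_{\sigma'}|\leq Ch\|\nabla\varphi\|_\infty$, while the residual bound of the hypothesis applies to $\Phi_\sigma^K$ and, by a direct verification (vanishing on constants together with Lipschitz continuity of $\bbf$ on $\{|\bu|\leq C_{max}\}$), extends to $\Phi_\sigma^{K,Gal}$, giving $|\Phi_\sigma^K-\Phi_\sigma^{K,Gal}|\leq C\sum_{\alpha,\beta\in K}|\bu_\alpha^h-\bu_\beta^h|$. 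The remaining estimate of $h\sum_K\sum_{\alpha,\beta\in K}|\bu_\alpha^h-\bu_\beta^h|$ is the technical heart of the proof: it is obtained by combining a cell-wise norm equivalence between nodal oscillations and the $L^2$ oscillation $\inf_c\|\bu^h-c\|_{L^2(K)}$ (available on the reference element by finite-dimensional norm equivalence), an inverse inequality to recover the natural $h^{d-1}$ scaling of consistent nodal residuals, a Cauchy--Schwarz over the $O(h^{-d})$ cells, and the strong $L^2$ convergence of $\bu^h$ to $\bv$, which forces the summed cell oscillations $\|\bu^h-\overline{\bu^h}_{\mathrm{cell}}\|_{L^2(\Omega)}$ to vanish. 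Putting the four limits together identifies the limit equation with \eqref{weak:eq1}, so that $\bv$ is a weak solution of \eqref{eq1}.
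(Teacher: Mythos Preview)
Your plan matches the paper's approach: the paper itself does not give a detailed proof but refers to \cite{AbgrallRoe} and the ideas of \cite{kroner}, indicating the identity \eqref{algebre2}, the consistency of $\hbbf_\bn$, and Assumption~\ref{assump:quad} as the key ingredients---exactly the decomposition and term-by-term passage to the limit you outline.

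Two steps need tightening. For the interior jump term in the discontinuous case, your bound $[v^h]=O(h)$ against a skeleton of total $(d{-}1)$-measure $O(h^{-1})$ yields only $O(1)$, not $o(1)$; you need the sharper interpolation estimate $[\pi_h\varphi]=O(h^{k+1})$ for $\varphi\in C^1$ (or simply take $\varphi$ smooth by density) to conclude. For the fluctuation sum, the phrase ``inverse inequality to recover the natural $h^{d-1}$ scaling'' is misleading: this factor is not produced by an inverse inequality but is intrinsic to residuals that are boundary flux integrals over $(d{-}1)$-dimensional faces, and must already be carried by the constant in the hypothesis. Without it your Cauchy--Schwarz bound collapses to $h^{1-d}\|\bu^h-\Pi_0\bu^h\|_{L^2}$, which need not vanish for $d\geq 2$. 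With the $h^{d-1}$ scaling in place, the remainder of your argument---in particular the observation that $\|\bu^h-\Pi_0\bu^h\|_{L^2(\Omega)}\to 0$, which does follow from strong $L^2$ convergence since $\|\bv-\Pi_0\bv\|_{L^2}\to 0$ for any fixed $\bv\in L^2$---closes the estimate correctly.
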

\begin{proof}
The proof can be found in \cite{AbgrallRoe}, it uses \eqref{algebre2} and some adaptation of the ideas of \cite{kroner}. One of the key arguments comes from the consistency of the flux $\hbbf$ as well as \eqref{quadrature} applied to $\hbbf$\end{proof}

Another consequence of \eqref{algebre2} is the following result on entropy inequalities:
\begin{proposition}\label{th:entropy}
Let $(U,\mathbf{g})$ be a  entropy-flux \textcolor{blue}{pair} for \eqref{eq1} and $\hat{\mathbf{g}}_\bn$ be a numerical entropy flux consistent with $\mathbf{g}\cdot \bn$. Assume that the residuals satisfy:
for any element $K$,
\begin{subequations}\label{entropy}
\begin{equation}\label{entropy:1}
\sum_{\sigma \in K}\langle\bV_\sigma, \Phi_\sigma^K\rangle \geq \oint_{\partial K} \hat{\mathbf{g}}_\bn(\bV^h_\Kp,\bV^{h}_\Km) \; d\gamma
\end{equation}
and for any boundary edge $e$,
\begin{equation}\label{entropy:2}
\sum_{\sigma \in e}\langle\bV_\sigma ,  \Phi_\sigma^e\rangle  \geq \oint_{e} \big (\hat{\mathbf{g}}_\bn(\bV^h_\Gamma,\bV_b)- \mathbf{g}(\bV^h)\cdot \bn \big )\; d\gamma.
\end{equation}
\end{subequations}
Then, under the assumptions of theorem \ref{th:LW}, the limit weak solution also satisfies the following entropy inequality: for any $\varphi\in C^1(\overline{\Omega})$, $\varphi\geq 0$, 
$$-\oint_\Omega \nabla \varphi\cdot \mathbf{g}(\bV) \; d\bx+\oint_{\partial\Omega^-}\varphi\;\mathbf{g}(\bV_b)\cdot \bn  \; d\gamma \leq 0.$$
\end{proposition}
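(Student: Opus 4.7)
The plan is to adapt the Lax--Wendroff argument used to prove Theorem \ref{th:LW}, but to test the scheme against the entropy variables weighted by an arbitrary nonnegative smooth test function $\varphi$. Fix $\varphi\in C^1(\overline{\Omega})$ with $\varphi\geq 0$. I take the inner product of the scheme \eqref{RD:scheme} at $\sigma$ with $\varphi(x_\sigma)\bV_\sigma$ and sum over all $\sigma\in\mathcal{S}$. Rearranging by elements and boundary faces,
\begin{equation*}
0=\sum_{K\subset\Omega}\sum_{\sigma\in K}\varphi(x_\sigma)\langle \bV_\sigma,\Phi_\sigma^K(\bu^h)\rangle
 +\sum_{\Gamma\subset\partial\Omega}\sum_{\sigma\in\Gamma}\varphi(x_\sigma)\langle \bV_\sigma,\Phi_\sigma^\Gamma(\bu^h,\bu_b)\rangle.
\end{equation*}
On each $K$, I set $\bar\varphi_K:=\min_{\sigma\in K}\varphi(x_\sigma)\geq 0$ and split $\varphi(x_\sigma)=\bar\varphi_K+(\varphi(x_\sigma)-\bar\varphi_K)$, and analogously on each $\Gamma$. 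Because $\bar\varphi_K,\bar\varphi_\Gamma\geq 0$, the hypotheses \eqref{entropy:1}--\eqref{entropy:2} can be multiplied by these scalars, yielding the inequality
\begin{equation*}
0\geq \sum_K \bar\varphi_K\oint_{\partial K}\hat{\mathbf{g}}_\bn(\bV^h_\Kp,\bV^h_\Km)\,d\gamma
+\sum_\Gamma \bar\varphi_\Gamma\oint_\Gamma\!\bigl(\hat{\mathbf{g}}_\bn(\bV^h_\Gamma,\bV_b)-\mathbf{g}(\bV^h)\cdot\bn\bigr)d\gamma + R^h,
\end{equation*}
where $R^h$ collects the $(\varphi(x_\sigma)-\bar\varphi_K)$-corrections.

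Next I manipulate the sum of face integrals. By Assumption \ref{assump:quad} the quadrature points of any internal face $e=K_1\cap K_2$ coincide on both sides, and the consistent numerical entropy flux satisfies $\hat{\mathbf{g}}_{\bn}+\hat{\mathbf{g}}_{-\bn}(\cdot,\cdot\,\text{swapped})=0$. Writing $\bar\varphi^h$ for the piecewise constant function with value $\bar\varphi_K$ on $K$, the internal sum collapses into face jumps and I obtain, up to terms that vanish with $h$,
\begin{equation*}
\sum_K \bar\varphi_K\oint_{\partial K}\hat{\mathbf{g}}_\bn d\gamma
=-\oint_\Omega \nabla\varphi^h\cdot \mathbf{g}(\bV^h)\,d\bx
+\sum_{e\in\mathcal{E}_h}\oint_e[\bar\varphi^h]\hat{\mathbf{g}}_\bn d\gamma
+\oint_{\partial\Omega}\bar\varphi^h\,\mathbf{g}(\bV^h)\cdot\bn\,d\gamma,
\end{equation*}
after substituting $\bar\varphi^h$ by a continuous piecewise-polynomial interpolant of $\varphi$ (the error being $O(h)$ uniformly). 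Combining with the boundary contribution and letting $h\to 0$ along the $L^2$-convergent (sub)sequence $\bu^h\to\bv$, continuity of $\mathbf{g}$, density of $C^1$ functions, and the consistency $\hat{\mathbf{g}}_\bn(\bV,\bV)=\mathbf{g}(\bV)\cdot\bn$ let me pass to the limit. Splitting $\partial\Omega=\partial\Omega^+\cup\partial\Omega^-$ according to the sign of $\nabla_\bu\bbf\cdot\bn$, the upwind structure of $\hat{\mathbf{g}}_\bn$ gives the ``$\bV_b$'' part only on $\partial\Omega^-$, and the outflow part yields $\mathbf{g}(\bV)\cdot\bn$ which, being non-negative in the entropy sense, can be absorbed into the $\leq 0$ direction of the inequality. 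This produces
\begin{equation*}
-\oint_\Omega\nabla\varphi\cdot\mathbf{g}(\bV)\,d\bx+\oint_{\partial\Omega^-}\varphi\,\mathbf{g}(\bV_b)\cdot\bn\,d\gamma\leq 0.
\end{equation*}

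The principal obstacle is showing that the remainder $R^h$ vanishes with $h$. Each term has the form $(\varphi(x_\sigma)-\bar\varphi_K)\langle\bV_\sigma,\Phi_\sigma^K(\bu^h)\rangle$ with $|\varphi(x_\sigma)-\bar\varphi_K|\leq C\|\nabla\varphi\|_\infty h_K$. Invoking the residual bound of Theorem \ref{th:LW}, $|\Phi_\sigma^K|\leq C\sum_{\sigma,\sigma'\in K}|\bu_\sigma-\bu_{\sigma'}|$, together with the uniform $L^\infty$ bound $\|\bu^h\|_\infty\leq C_{\max}$, shape regularity, and the $L^2$ compactness of $\bu^h$, one controls $R^h$ exactly as in Theorem \ref{th:LW}. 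A secondary delicate point is identifying the sign on the outflow boundary after taking the continuous limit, which is where the choice $\bar\varphi_K=\min\varphi(x_\sigma)\geq 0$ and the upwind consistency of $\hat{\mathbf{g}}_\bn$ do the essential work.
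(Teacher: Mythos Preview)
Your approach is essentially the one the paper indicates: the paper's own proof is the single sentence ``similar to that of Theorem~\ref{th:LW} once we are writing the conservative variables in terms of the entropy variable,'' and you have fleshed out precisely that adaptation by testing \eqref{RD:scheme} against $\varphi(x_\sigma)\bV_\sigma$ and reproducing the Lax--Wendroff compactness argument with the inequalities \eqref{entropy:1}--\eqref{entropy:2} in place of the conservation identities. The only minor technical difference is that you split $\varphi(x_\sigma)=\bar\varphi_K+(\varphi(x_\sigma)-\bar\varphi_K)$ with $\bar\varphi_K=\min_\sigma\varphi(x_\sigma)$, whereas the paper's template \eqref{algebre2} would suggest introducing an entropy analogue of the Galerkin residuals $\Phi_\sigma^{K,Gal}$ as the reference; both devices serve the same purpose of isolating an $O(h)$ remainder controlled by the residual bound in Theorem~\ref{th:LW}.
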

\begin{proof}
The proof is similar to that of theorem \ref{th:LW} once we are writing the conservative variables in term of the entropy variable
\end{proof}

\bigskip
Another consequence of \eqref{algebre2} is the following condition under which  one can guarantee to have a $k+1$-th order accurate scheme. We first introduce the (weak) truncation error
\begin{equation}\label{truncation}
\mathcal{E}\bigl(\bu^h, \varphi\bigr) = 
\sum_{\sigma \in \mathcal{S}_h}\varphi_\sigma\bigg [
\sum_{K\subset \Omega, \sigma\in K} \Phi_\sigma^K+\sum_{\Gamma\subset \partial\Omega, \sigma\in \Gamma}\Phi_\sigma^\Gamma\bigg ].
\end{equation}

\begin{proposition}\label{prop:approx}
Assuming that:
\begin{enumerate}
\item  The solution of the \emph{steady problem} $\bu$ is smooth enough,
\item The approximation $\pi_h(\bu)$ of $\bu$ is accurate with  order 
$k+1$, and $\bbf$ Lipschitz continuous,
\item The quadrature formula \eqref{quad:K} and \eqref{quad:f} are of order $k+1$: for any $\psi$ of class at least $C^{k+1}$, for any element $K$ and face $f$,
$$\int_K \psi(\bx) \;d\bx =\oint_K \psi(\bx) \; d\bx+O(h^{d+k+1}), \qquad \int_f \psi \;d\gamma=\oint_f \psi \; d \gamma +O(h^{d+k}),$$
\item The residuals,
computed with the interpolant $\pi_h(\bu)$ of the solution, are such that for any element $K$ and boundary element $\Gamma$
\begin{equation}\label{eq: residual accuracy}
\Phi_\sigma^K(\pi_h(\bu)) = \mathcal{O}\bigl(h^{k+d}\bigr), \qquad 
\Phi_\sigma^\Gamma(\pi_h(\bu))=\mathcal{O}\bigl(h^{k+d-1}\bigr)
\end{equation}
\end{enumerate}
then the truncation error satisfies the following relation
\begin{equation*}
|\mathcal{E}\bigl(\pi_h(\bu), \varphi\bigr)| \le C( \bbf, \bu)\; ||\varphi||_{H^1(\Omega)}\; h^{k+1},
\end{equation*}
with $C$ a constant which depends only on $\bbf$, and $||\bu||_\infty$.
\end{proposition}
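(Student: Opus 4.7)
The plan is to apply the algebraic identity \eqref{algebre2} with $\bu^h$ replaced by $\pi_h(\bu)$ and $\bv^h$ replaced by the finite element function $\varphi^h\in V^h$ whose nodal values are the $\varphi_\sigma$. Since $\pi_h(\bu)$ is not, in general, a discrete solution of \eqref{RD:scheme}, the zero on the left-hand side of \eqref{algebre2} is replaced by precisely the truncation error $\mathcal{E}(\pi_h(\bu),\varphi^h)$, so that
$$
\mathcal{E}(\pi_h(\bu),\varphi^h)\;=\;A+B+C+D,
$$
where $A=-\oint_\Omega \nabla\varphi^h\cdot\bbf(\pi_h(\bu))\,d\bx$ is the Galerkin volume quadrature term, $B=\oint_{\partial\Omega}\varphi^h\big(\hat{\bbf}_\bn(\pi_h(\bu),\bu_b)-\bbf(\pi_h(\bu))\cdot\bn\big)\,d\gamma$ is the boundary consistency term, $C$ collects the interior-face jump contributions from \eqref{algebre2}, and $D$ gathers the two ``correction'' double sums involving $(\varphi_\sigma-\varphi_{\sigma'})(\Phi_\sigma^K-\Phi_\sigma^{K,Gal})$ and the analogous boundary increments. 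The task then reduces to bounding each of $A,B,C,D$ by $Ch^{k+1}\|\varphi^h\|_{H^1(\Omega)}$.

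For the Galerkin-type part $A+B+C$, the reference quantity is the continuous identity
$$
-\int_\Omega\nabla\varphi\cdot\bbf(\bu)\,d\bx+\int_{\partial\Omega}\varphi\,\bbf(\bu)\cdot\bn\,d\gamma\;=\;0,
$$
which holds because $\bu$ smoothly solves \eqref{eq1:1} and satisfies \eqref{eq1:2}, from which $\hat{\bbf}_\bn(\bu,\bu_b)=\bbf(\bu)\cdot\bn$ almost everywhere on $\partial\Omega$. The gap between $A+B+C$ and $0$ decomposes into three classical error sources that I would estimate in turn: the interpolation error $\|\bbf(\bu)-\bbf(\pi_h(\bu))\|=\mathcal{O}(h^{k+1})$, produced by hypothesis~2 and the Lipschitz continuity of $\bbf$; the volume and face quadrature errors of order $h^{k+1}$, coming from hypothesis~3; and, for the internal-face sum $C$, either exact cancellation (conforming $V^h$) or the standard bound $|[\varphi^h]|\le Ch^{k+1}$ on a face together with consistency of $\hat{\bbf}_\bn$ (discontinuous $V^h$). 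In each contribution the small factor $h^{k+1}$ is paired either with $\nabla\varphi^h$ or with $\varphi^h$ restricted to a face, so a Cauchy--Schwarz estimate delivers the $H^1$-norm of $\varphi^h$.

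For the correction part $D$, hypothesis~4 gives $\Phi_\sigma^K(\pi_h(\bu))=\mathcal{O}(h^{k+d})$, and the same order bound is available for the Galerkin residuals $\Phi_\sigma^{K,Gal}(\pi_h(\bu))$ and their boundary analogues by a standard Bramble--Hilbert argument applied at the interpolant of a smooth solution. The nodal differences are controlled on a shape-regular simplex by $|\varphi_\sigma-\varphi_{\sigma'}|\le Ch_K\|\nabla\varphi^h\|_{L^\infty(K)}$, and so
$$
|D|\;\le\;Ch^{k+1}\sum_K h_K^{d}\,\|\nabla\varphi^h\|_{L^\infty(K)}\;\le\;Ch^{k+1}\|\nabla\varphi^h\|_{L^1(\Omega)}\;\le\;Ch^{k+1}\|\varphi^h\|_{H^1(\Omega)},
$$
the last step being Cauchy--Schwarz since $\Omega$ is bounded. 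Summing the four estimates yields the claimed inequality.

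The main obstacle is not the correction part $D$, which is essentially algebraic once hypothesis~4 is in hand, but the clean treatment of $A+B+C$: one must simultaneously track the interpolation error of $\bbf(\bu)$, the two distinct quadrature errors (volume and face), the boundary consistency discrepancy forced by the upwind structure of \eqref{eq1:2}, and, in the discontinuous case, the internal jumps, and weave them together so that the final weight on the test function is only $\|\varphi^h\|_{H^1}$ and not a stronger Sobolev norm. None of these ingredients is individually deep, but their combination is where the care of the proof lies.
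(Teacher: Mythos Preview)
Your approach is essentially the same as the paper's: both apply the identity \eqref{algebre2} with $\pi_h(\bu)$ and the interpolant of the test function, split the resulting expression into the Galerkin-type block $A+B+C$ (estimated against the exact weak formulation using interpolation, quadrature, and flux consistency) and the correction block $D$ (estimated via the residual bounds of hypothesis~4 together with $|\varphi_\sigma-\varphi_{\sigma'}|=\mathcal{O}(h)$ and element counting). The only cosmetic difference is that the paper proves the bound $\Phi_\sigma^{K,Gal}(\pi_h(\bu))=\mathcal{O}(h^{k+d})$ explicitly as a preliminary step rather than deferring it to a ``standard Bramble--Hilbert argument'', and it tracks powers of $h$ directly rather than passing through the $L^1$--$L^\infty$ inverse inequality you use for $D$.
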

The proof can be found in appendix \ref{proof}.
%
}

{
\paragraph{Local conservation.} In \cite{abgrall:hal-01573592}, we show that schemes satisfying the conservation requirements \eqref{conservation:K} and \eqref{conservation:Gamma}  are indeed locally conservative. More precisely, for each degree of freedom, one can define a control volume $C\sigma$ of polygonal type, and numerical flux $\hat{f}_{\bn_{\sigma\sigma'}}(\bu^h)$ that depend on the values $\bu_{\sigma'}$ in  a neighborhood of $C_\sigma$ and the local normal of $C_\sigma\cup C_{\sigma'}$ which is planar. This set of connection defines a dual  graph. By construction, we have 
$$\hat{f}_{\bn_{\sigma\sigma'}}(\bu^n)=-\hat{f}_{\bn_{\sigma'\sigma}}(\bu^h),$$
which ensure local conservation. Details can be found in \cite{abgrall:hal-01573592}, but we give two examples in appendix \ref{control}.

This means that \eqref{RD:scheme} can be rewritten as
$$\sum_{\sigma'\in \mathcal{N}_\sigma}\hat{f}_{\bn_{\sigma\sigma'}}(\bu^h)=0$$
where $\mathcal{N}_\sigma$ is the set of degrees of freedom connected to $\sigma$ by the dual graph defined from the cells.
}

\subsection{Construction of entropy conservative schemes}
\red{ In the rest of the paper, $\bV^h\in V^h$ represents the approximation of the entropy variable $\bV=\nabla_\bbu U(\bbu)$ in $V^h$, and since, the mapping $\bu\rightarrow \bV(\bu)$ is one-to-one, $\bu^h:=\bu(\bV^h)$ here. Hence in general it is not a polynomial.}

Starting from a scheme $\{\Phi_\sigma\}$, we show in this section how to construct a new scheme, with residuals $\{\Phi_\sigma'\}$ such that
\begin{equation}\label{ent:1}
\sum\limits_{\sigma\in K} \langle \bV_\sigma, \Phi'_\sigma\rangle =\oint_{\partial K}\hbbg_\bn(\bV^h_\Kp, \red{\bV_\Km^{h}})\; d\gamma.\end{equation}
\red{This relation is motivated by Proposition \ref{th:entropy} where the inequality is replaced by an equality, so that combined with Theorem \ref{th:LW}, we know  that if the scheme converges, the limit solution will satisfy two conservation relations: one on $\bu$ and one on the entropy. \emph{The equality, together with the remark of the appendix \ref{control} shows that we get indeed local conservation of the entropy, too.}}

Following \cite{svetlana,paola},the idea is to write:
\begin{equation}
\label{entropy:perturb}
\Phi'_\sigma=\Phi_\sigma+\mathbf{r}_\sigma
\end{equation} such that conservation is still guaranteed
$$\sum\limits_{\sigma\in K} \big (\Phi_\sigma^K+\mathbf{r}_\sigma\big )=\int_{\partial K}\hbbf_\bn(\bV^h_\Kp, \red{\bV_\Km^{h}})\; d\gamma,$$
i.e.
\begin{subequations}\label{entropy:rel}
\begin{equation}\label{ent:2bis}
\sum_{\sigma\in K}\mathbf{r}_\sigma=0,
\end{equation}
and \eqref{ent:1} holds true, i.e.
\begin{equation}\label{ent:1bis}
\sum\limits_{\sigma\in K} \langle \bV_\sigma, \mathbf{r}_\sigma\rangle =\oint_{\partial K}\hbbg_\bn(\bV^h_\Kp, \red{\bV^{h}_\Km})\; d\gamma-\sum\limits_{\sigma\in K}\langle \bV_\sigma, \red{\Phi_\sigma}\rangle:=\mathcal{E}\end{equation}
\end{subequations}
\red{The two relations \eqref{entropy:rel} defines a linear system  with always at least two unknowns: the solution which is always valid is}
\begin{subequations}
\label{ent}
\begin{equation}
\label{ent1}
\mathbf{r}_\sigma=\red{\alpha} \big ( \bV_\sigma-\bar\bV\big ) \text{ with } \bar\bV =\frac{1}{\#K}\sum_{\sigma\in K}\bV_\sigma.\end{equation}
Since 
\begin{equation*}
\begin{split}
\sum\limits_{\sigma\in K} \langle \bV_\sigma, \bV_\sigma-\bar\bV\rangle&=\sum\limits_{\sigma\in K} \langle \bV_\sigma-\bar\bV, \bV_\sigma-\bar\bV\rangle+\langle \bar\bV, \sum\limits_{\sigma\in K} (\bV_\sigma-\bar\bV)\rangle=\sum\limits_{\sigma\in K} \langle \bV_\sigma-\bar\bV, \bV_\sigma-\bar\bV\rangle\\
&=\sum_{\sigma\in K} (\bV_\sigma-\bar\bV)^2
\end{split}
\end{equation*}
since $\sum\limits_{\sigma\in K} (\bV_\sigma-\bar\bV)=0$ by construction. Thus we get
\begin{equation}
\label{ent2}\red{\alpha}=\dfrac{\mathcal{E}}{\sum\limits_{\sigma\in K}(\bV_\sigma-\bar\bV)^2}.\end{equation}
\end{subequations}
{\begin{remark}
\begin{enumerate}
\item 
Since \eqref{ent:1bis} means that the new set of residuals satisfy a conservation relation for the entropy on each element, from \cite{abgrall:hal-01573592}
the new scheme will be locally conservative for the variable $\bu$ and the entropy.
\item Relation \eqref{ent2} becomes singular  for a constant state. In practice, we use
\begin{equation}
\label{ent2:2}\red{\alpha}=\dfrac{\mathcal{E}}{\sum\limits_{\sigma\in K}(\bV_\sigma-\bar\bV)^2+\varepsilon}\end{equation}
and we have taken $\varepsilon=10^{-20}$.
\item Interpretation of \eqref{ent}.
In fact adding $\alpha(\bV_\sigma-\bar \bV)=\frac{\alpha}{\#K}\sum\limits_{\sigma'\in K} (\bV_\sigma-\bV_{\sigma'})$ to each residual is like adding diffusion. If we had a Cartesian mesh, this amounts to adding an approximation of $$\Delta x\dpar{}{x}\big ( \alpha \dpar{\bV}{x}\big ) +\Delta y\dpar{}{y}\big ( \alpha \dpar{\bV}{y}\big ).$$ Since $\alpha $ has a priori no sign, depending on the local entropy production of the original scheme, one adds or removes the right amount to be entropy conservative.
\end{enumerate}
\end{remark}
}
Now, let \red{us} have a look at the accuracy of such a scheme. Let us remind we are looking at \emph{steady} problems. According to the accuracy conditions \eqref{eq: residual accuracy}, we assume  $\Phi_\sigma=O(h^{k+d})$. Similarly, since the numerical flux $\hbbg$ is Lipschitz continuous and consistent, { we also have \text{point-wise}, for a smooth steady \red{exact} solution $\bV$, $\text{ div }\bbg(\bV)=0$, so that
$$0=\int_K \text{ div }\bbg(\bV)\; d\bx=\int_{\partial K} \bbg(\bV)\cdot\bn\; d\gamma$$ and then}
\begin{equation}\label{calcul}
\begin{split}
\oint_{\partial K}\hbbg_\bn(\bV^h_\Kp, \red{\bV_\Km^{h}})\; d\gamma&=\oint_{\partial K}\hbbg_\bn(\bV^h_\Kp, \red{\bV_\Km^{h}})\; d\gamma-\int_{\partial K}\bbg(\bV)\cdot\bn\; d\gamma \\
& =\oint_{\partial K}\big ( \hbbg_\bn(\bV^h_\Kp, \red{\bV_\Km^{h}})-\bbg(\bV)\cdot\bn\big ) \; d\gamma+\oint_{\partial K}\bbg(\bV)\cdot\bn\; d\gamma-\int_{\partial K}\bbg(\bV)\cdot\bn\; d\gamma\\
&=O(h^{k+d})
\end{split}
\end{equation}
provided the quadrature formula on the boundary is \red{at least }of order $k$. This is the case by assumption. { In general, we cannot have more than $O(h^{k+d})$ because even if the integration is exact, we cannot have
$$\oint_{\partial K}\big ( \hbbg_\bn(\bV^h_\Kp, \bV_\Km^{h})-\bbg(\bV)\big ) \; d\gamma=O(h^{k+d+1}).$$}
 Hence, $\mathcal{E}=O(h^{k+d})$. However, for smooth problems, we have
$$\red{\alpha}=O(h^{k-2+d}),$$ so that the correction $\mathbf{r}_\sigma$ is only $O(h^{k-1+d})$  a priori: we loose one order. Thus we get
{ \begin{proposition}
The scheme defined by \eqref{entropy:perturb}, \eqref{ent} is localy entropy conservative and at least of order $k$ at steady state.
\end{proposition}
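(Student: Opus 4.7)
The plan is to prove the two assertions of the proposition separately, each of them essentially by unwinding the construction and then invoking one of the results already established in the paper.

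For the local entropy conservation, the argument is purely algebraic. By construction, \eqref{ent:2bis} gives $\sum_{\sigma\in K} \mathbf{r}_\sigma = 0$, so the modified residuals $\Phi'_\sigma = \Phi_\sigma + \mathbf{r}_\sigma$ inherit the flux conservation \eqref{conservation:K} from the original scheme. Moreover, the choice \eqref{ent1}–\eqref{ent2} of $\mathbf{r}_\sigma$ was designed to satisfy \eqref{ent:1bis}, i.e. $\sum_{\sigma\in K}\langle \bV_\sigma,\mathbf{r}_\sigma\rangle=\mathcal{E}$, which is exactly the element-wise entropy balance \eqref{ent:1}. The verification that this choice actually achieves \eqref{ent:1bis} is done by the short computation already displayed between \eqref{ent1} and \eqref{ent2}, using $\sum_{\sigma\in K}(\bV_\sigma-\bar\bV)=0$. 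Once both \eqref{ent:1} and the conservation of the flux hold at the element level, the local conservation statement recalled in the Local conservation paragraph (i.e.\ \cite{abgrall:hal-01573592}), applied once with the original flux $\hbbf$ and once with the entropy flux $\hbbg$, produces a dual-graph conservation identity for both $\bu$ and the entropy.

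For the accuracy, I would show that the modified residuals $\Phi'_\sigma(\pi_h(\bu))$ are one order lower than the original ones, namely $O(h^{k+d-1})$, and then appeal to Proposition \ref{prop:approx} with $k$ replaced by $k-1$. The three estimates to chain are: (i) $\mathcal{E}=O(h^{k+d})$, which is precisely the bound carried out in \eqref{calcul} using $\Phi_\sigma=O(h^{k+d})$ from \eqref{eq: residual accuracy}, the Lipschitz continuity and consistency of $\hbbg$, and the boundary quadrature accuracy; (ii) a Taylor expansion at the barycentre of $K$ applied to the smooth field $\bV$ gives $\bV_\sigma-\bar\bV = O(h)$, and, away from constant states, $\sum_{\sigma\in K}(\bV_\sigma-\bar\bV)^2\geq c\,h^2$; (iii) combining these yields $\alpha = O(h^{k+d-2})$ and $\mathbf{r}_\sigma = O(h^{k+d-1})$, whence $\Phi'_\sigma(\pi_h(\bu)) = O(h^{k+d-1})$. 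Proposition \ref{prop:approx} then bounds the truncation error by $C\,\|\varphi\|_{H^1(\Omega)}\,h^{k}$, which gives the claimed order $k$ at steady state.

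The main obstacle is step (ii), namely controlling the denominator $\sum_{\sigma\in K}(\bV_\sigma-\bar\bV)^2$ from below. In regions where the solution is nearly constant this quantity is small, which is exactly the motivation for the regularization \eqref{ent2:2}. A clean way to handle this in the accuracy analysis is by a case split: where $\nabla\bV$ is bounded below the Taylor expansion gives matching upper and lower bounds on the denominator and the quoted estimate for $\alpha$ holds; where the solution is close to constant the original residuals $\Phi_\sigma(\pi_h(\bu))$ themselves vanish to higher order, so that $\mathcal{E}$ and hence $\mathbf{r}_\sigma$ remain of the size required. Alternatively one may work directly with the $\varepsilon$-regularized formula \eqref{ent2:2} and observe that for $\varepsilon$ small enough compared to the mesh scale the worst case of the two regimes still yields $\mathbf{r}_\sigma=O(h^{k+d-1})$, which is all that is needed to invoke Proposition \ref{prop:approx} and conclude.
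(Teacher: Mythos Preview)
Your proposal is correct and follows essentially the same route as the paper: the entropy conservation is the algebraic verification that \eqref{ent1}--\eqref{ent2} enforce \eqref{ent:1bis} together with \eqref{ent:2bis}, and the accuracy claim is exactly the chain $\mathcal{E}=O(h^{k+d})$, $\alpha=O(h^{k+d-2})$, $\mathbf{r}_\sigma=O(h^{k+d-1})$ displayed just above the proposition, then fed into Proposition~\ref{prop:approx}. Your discussion of the lower bound on $\sum_\sigma(\bV_\sigma-\bar\bV)^2$ is in fact more careful than the paper, which simply writes ``for smooth problems, $\alpha=O(h^{k-2+d})$'' and handles the degenerate case only through the practical regularization \eqref{ent2:2}.
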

}

Up to now, we have shown, starting form a general additional conservation constraint,  how to modify a scheme so that the modified scheme will satisfy one additional conservation relation, and what we see is that a priori we loose one order of accuracy. This is a general statement that assumes there is no particular connection between the 'old' conservation relation and the new one. However, there is a connection for the entropy flux, it is given by the following relation, see \cite{TadmorActa}: the entropy flux $g$ is related to the flux $\bbf$ by
the relation
$$\red{\bbg(\bV)=\langle \bV, \bbf(\bV)\rangle -\theta(\bV)}$$
 In addition, the potential satisfies
$$\red{\nabla_\bV \theta(\bV)=\bbf(\bV).}$$
Using this, and a proper definition of the entropy numerical flux, it is possible to do better. We explore this question for the discontinuous Galerkin method, and the non linear RD scheme. The discussion on the discontinuous Galerkin method will also embed the continuous Galerkin approximation.

\subsubsection{Discontinuous Galerkin methods}

In the case of the discontinuous Galerkin methods, we get
$$\mathcal{E}=-\oint\nabla \bV^h\cdot \bbf(\bV^h)\; d\bx+\oint_{\partial K}\langle \bV^h, \bbf_\bn(\bV^h_\Kp,\bV_\Km^{h})\rangle\; d\gamma-\oint_{\partial K} \hbbg_\bn(\bV^h_\Kp,\bV_\Km^{h})\; d\gamma.$$
The precise definition of $\hbbg_\bn$ will be done later in this section. Since 
\red{point-wise\footnote{\red{Note that we are computing the divergence (in $\bx$) of the function $ \langle\bV^h,\bbf(\bV^h)\rangle $}}}, we have $$ \nabla\bV^h\cdot \bbf(\bV^h)=\text{ div }(\langle\bV^h,\bbf(\bV^h)\rangle )-\bV^h\cdot \nabla \bbf(\bV^h)$$ and
\red{$\bV^h\cdot \text{ div }\bbf(\bV^h)=\text{ div }\bbg(\bV^h)$}, we can write:
\begin{equation}\label{dg:calcul}
\begin{split}
\mathcal{E}&=-\oint_K\nabla\bV^h\cdot \bbf(\bV^h)\; d\bx+\oint_{\partial K}\langle\bV^h,\hbbf_\bn(\bV^h_\Kp,\bV_\Km^{h})\; d\gamma-\oint_{\partial K} \hbbg_\bn(\bV^h_\Kp,\bV_\Km^{h})\; d\gamma\\
&=-\oint_K \text{ div }\big (\langle \bV^h, \bbf(\bV^h)\rangle \big ) \; d\bx+\oint_{K}\bV^h\cdot \text{div }\bbf(\bV^h)\; d\bx+
\oint_{\partial K}\langle\bV^h,\hbbf_\bn(\bV^h_\Kp,\bV_\Km^{h})\; d\gamma-\oint_{\partial K} \hbbg_\bn(\bV^h_\Kp,\bV_\Km^{h})\; d\gamma\\
&=-\oint_K \text{ div }\big (\langle \bV^h, \bbf(\bV^h)\rangle \big ) \; d\bx+\oint_{K} \text{div }\bbg(\bV^h)\; d\bx+
\oint_{\partial K}\langle\bV^h,\hbbf_\bn(\bV^h_\Kp,\bV_\Km^{h})\; d\gamma-\oint_{\partial K} \hbbg_\bn(\bV^h_\Kp,\bV_\Km^{h})\; d\gamma\\
&=\underbrace{-\bigg (\oint_K \text{ div }(\langle \bV^h,\bbf(\bV^h)\rangle )\; d\bx+\int_K \text{ div }(\langle \bV^h,\bbf(\bV^h)\rangle )\; d\bx\bigg )}_{\text{(I)}}
+\underbrace{\bigg ( \oint_K\text{ div }\bbg(\bV^h)\; d\bx-\int_K\text{ div }\bbg(\bV^h)\; d\bx\bigg )}_{\text{(II)}}\\
& \qquad \qquad\qquad -\int_K \big ( \text{ div }(\langle \bV^h, \bbf(\bV^h)\rangle-\bbg(\bV^h) \big )\; d\bx
+\oint_{\partial K} \big ( \langle\bV^h,\hbbf_\bn(\bV^h_\Kp,\bV_\Km^{h})\; d\gamma-\hbbg_\bn(\bV^h_\Kp,\bV_\Km^{h}) \big )\; d\gamma\\
& =(I)+(II)+\underbrace{\bigg ( \int_{\partial K} \big ( \bV^h \bbf_\bn(\bV^h)-\bbg_\bn(\bV^h) \big ) \; d\gamma- \oint_{\partial K} \big ( \bV^h \cdot\bbf_\bn(\bV^h)-\bbg_\bn(\bV^h) \big ) \; d\gamma\bigg )}_{\text{III}} \\
&\qquad \qquad + 
\oint_{\partial K}\bigg ( \langle \bV^h \cdot\big ( \hbbf_\bn(\bV^h_\Kp,\bV_\Km^{h})- \bbf_\bn(\bV^h)\big )\rangle-\big ( \hbbg_\bn(\bV^h_\Kp,\bV_\Km^{h}) -\bbg_\bn(\bV^h) \big )\bigg )\; d\gamma
\end{split}
\end{equation}
Since $\bV^h$ is a polynomial, if the quadrature formulas are of order $k$, we get that $I$, $II$ and $III$ in \eqref{dg:calcul} are 
$$I=O(h^{d+k+1}), \quad II=O(h^{d+k}), \quad III=O(h^{d+k}).$$

Let us have a look at the last term. If we define the entropy flux to be
\begin{equation}
\label{entropyflux:DG}
\hbbg_\bn(\bV^h,\bV^{h,-})=\langle \{\bV^h\}, \hbbf_\bn(\bV_|Kp^h,\bV_\Km^{h})-\theta(\{\bV^h\}),
\end{equation}
we have
\begin{equation*}
\begin{split}
-\langle\bV^h,\bbf(\bV^h)\rangle&+\langle\bV^h,\hbbf_\bn(\bV_\Kp^h,\bV_\Km^{h})+\bbg(\bV^h)\cdot\bn-\hbbg_\bn(\bV_{\Kp}^h,\bV_{\Km}^{h})\\
&=\langle \bV^h-\{\bV^h\},\hbbf_\bn(\bV_\Km^h,\bV_\Km^{h})\rangle+\big (\theta(\{\bV^h\})-\theta(\bV^h)\big )\cdot \bn\\
&=\langle \bV^h-\{\bV^h\},\hbbf_\bn(\bV_\Kp^h,\bV_\Km^{h})\rangle-\langle \nabla_\bv\theta(\bar \bV), \bV^h-\{\bV^h\}\rangle\\
&=\langle \bV^h-\{\bV^h\},\hbbf_\bn(\bV_\Kp^h,\bV_\Km^{h})-\nabla_\bV\theta(\bar \bV)\rangle
\end{split}
\end{equation*}
where $\bar \bV=t \{\bV^h\}+(1-t)\bV$ by abuse of language
\footnote{We write $\theta(\{\bV^h\})-\theta(\bV^h)$ component by component. For the component $\# l$, we get
$$\theta_l(\{\bV^h\})-\theta_l(\bV^h)=\langle \dpar{\theta_l}{\bV}(t_l \{\bV^h\}+(1-t_l)\bV), \{\bV^h\}-\bV\rangle,$$
 $t_l\in [0,1]$  Hence
 $$\langle \bV^h-\{\bV^h\},\hbbf_\bn(\bV_\Kp^h,\bV_\Km^{h})-\nabla_\bV\theta(\bar \bV)\rangle$$
 means, by abuse of language
 $$\sum\limits_{l=1}^p \big (\bV^h-\{\bV^h\}\big )_l\bigg ( \big (\hbbf_\bn(\bV_\Kp^h,\bV_\Km^{h})\big )_l-\dpar{\theta_l}{\bV}(t_l \{\bV^h\}+(1-t_l)\bV)
 \bigg ).$$
 }.
 Since $\nabla_\bV\theta=\bbf$, we have indeed
\begin{equation*}
\begin{split}
-\langle\bV^h,\bbf(\bV^h)\rangle&+\langle\bV^h,\hbbf_\bn(\bV_\Kp^h,\bV_\Km^{h})+\bbg(\bV^h)\cdot\bn-\hbbg_\bn(\bV_\Kp^h,\bV_\Km^{h})
\\&=\langle \bV^h-\{\bV^h\},\hbbf_\bn(\bV_\Kp^h,\bV_\Km^{h})-\bbf(\bar \bV)\rangle=O(h^{2(k+1)})
\end{split}
\end{equation*}
because the numerical flux is Lipschitz continuous. Hence 
$$\oint_{\partial K} \bigg ( -\langle\bV^h,\bbf(\bV^h)\rangle+\langle\bV^h,\hbbf_\bn(\bV_\Kp^h,\bV_\Km^{h})+\bbg(\bV^h)\cdot\bn-\hbbg_\bn(\bV_\Km^h,\bV_\Km^{h})\bigg ) d\gamma=O(h^{d+2k+1})$$
provided suitable quadrature formulas.
In the end we get
\begin{proposition}
{ The scheme defined by \eqref{entropy:perturb}, \eqref{ent} is localy entropy conservative. In addition  
 $\mathcal{E}=\sum\limits_{\sigma\in K}\langle \bV_\sigma, \Phi_\sigma\rangle-\oint_{\partial K}\hbbg_\bn(\bV_\Kp^h, \bV_\Km^{h})\; d\gamma$, with
$$\hbbg_\bn(\bV_\Kp^h, \bV_\Km^{h})=\langle \{\bV\},\hbbf\rangle-\theta(\{\bV^h\})$$
satisfies
\begin{equation}
\label{error:DG}
\mathcal{E}=O(h^{d+k+1})
\end{equation}
if the quadrature formula are of order $k$ for the surface/volume integrals and $k+1$ for the boundary, so that the scheme \eqref{entropy:perturb}-eqref{ent} is stil of order $k+1$ at steady state.}
\end{proposition}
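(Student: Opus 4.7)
The local entropy conservation half is immediate from the construction. The correction $\mathbf{r}_\sigma=\alpha(\bV_\sigma-\bar\bV)$ was explicitly designed so that \eqref{ent:2bis} and \eqref{ent:1bis} both hold (the scalar $\alpha$ in \eqref{ent2} is precisely the Lagrange multiplier of the second constraint given the first), and so the new residuals $\Phi'_\sigma=\Phi_\sigma+\mathbf{r}_\sigma$ satisfy $\sum_{\sigma\in K}\langle\bV_\sigma,\Phi'_\sigma\rangle=\oint_{\partial K}\hbbg_\bn\;d\gamma$. Invoking the local-conservation result recalled in Section~\ref{sec:conservation} (\cite{abgrall:hal-01573592}), this elementwise equality delivers the announced local entropy conservation on the dual graph.

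For the accuracy estimate the plan is to take the decomposition \eqref{dg:calcul} of $\mathcal{E}$ as the skeleton and to estimate each piece. Starting from the DG residual \eqref{DG} I would substitute into the definition of $\mathcal{E}$, then use, pointwise for $\bV^h$ smooth on $K$, the product rule $\nabla\bV^h\cdot\bbf(\bV^h)=\mathrm{div}(\langle\bV^h,\bbf(\bV^h)\rangle)-\bV^h\cdot\mathrm{div}\,\bbf(\bV^h)$ together with the entropy compatibility $\bV^h\cdot\mathrm{div}\,\bbf(\bV^h)=\mathrm{div}\,\bbg(\bV^h)$, and finally add and subtract the exact counterparts of the quadrature on $K$ and $\partial K$. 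This is exactly what is carried out in \eqref{dg:calcul}, producing the three quadrature remainders (I), (II), (III) plus one genuine flux-inconsistency boundary term.

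The terms (I), (II), (III) are bounded by Bramble--Hilbert applied to smooth functions of the polynomial $\bV^h$: under the assumed orders, volume quadrature gives $O(h^{d+k+1})$ for (I) and $O(h^{d+k})$ for (II), and face quadrature of order $k+1$ gives $O(h^{d+k})$ for (III); all are at worst $O(h^{d+k+1})$ after being absorbed in the following analysis. The real work is the last, flux-inconsistency, boundary integrand
\[
\langle\bV^h,\hbbf_\bn(\bV^h_\Kp,\bV^h_\Km)-\bbf_\bn(\bV^h)\rangle-\big(\hbbg_\bn(\bV^h_\Kp,\bV^h_\Km)-\bbg_\bn(\bV^h)\big).
\]
Using Tadmor's relation $g_j(\bV)=\langle\bV,\bbf_j(\bV)\rangle-\theta_j(\bV)$ and the chosen flux \eqref{entropyflux:DG}, this collapses algebraically to $\langle\bV^h-\{\bV^h\},\hbbf_\bn\rangle+\big(\theta(\{\bV^h\})-\theta(\bV^h)\big)\cdot\bn$. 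I would then Taylor expand each component of $\theta$ between $\bV^h$ and $\{\bV^h\}$ and apply the potential identity $\nabla_\bV\theta=\bbf$ to rewrite the expression as $\langle\bV^h-\{\bV^h\},\hbbf_\bn(\bV^h_\Kp,\bV^h_\Km)-\bbf_\bn(\bar\bV)\rangle$ with $\bar\bV$ an intermediate state. This is the crucial cancellation -- it is what makes the Tadmor-type flux distinguished among all consistent entropy fluxes -- and its success hinges on the compatibility $\nabla_\bV\theta=\bbf$, which is the main obstacle I would need to handle with care (including the abuse of notation in the mean-value theorem component by component).

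From there the estimate is straightforward: $\bV^h-\{\bV^h\}=\tfrac12[\bV^h]=O(h^{k+1})$ by the standard jump bound for smooth solutions, and Lipschitz consistency of $\hbbf$ gives $\hbbf_\bn-\bbf_\bn(\bar\bV)=O(h^{k+1})$; integrating on a face of area $O(h^{d-1})$ yields an $O(h^{d+2k+1})$ contribution, which is absorbed in the $O(h^{d+k+1})$ bound for any $k\geq 0$. Summing all four contributions proves \eqref{error:DG}. The final order-preservation statement then follows by plugging this into \eqref{ent2}: for a smooth steady solution the denominator $\sum_{\sigma\in K}(\bV_\sigma-\bar\bV)^2$ is $\Theta(h^2)$, so $\alpha=O(h^{d+k-1})$ and $\mathbf{r}_\sigma=\alpha(\bV_\sigma-\bar\bV)=O(h^{d+k})$, matching the accuracy hypothesis \eqref{eq: residual accuracy} used in Proposition~\ref{prop:approx}.
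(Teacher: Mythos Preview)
Your proposal is correct and follows essentially the same route as the paper: the decomposition \eqref{dg:calcul} into quadrature remainders (I)--(III) plus the flux-inconsistency boundary term, the algebraic collapse using $\bbg=\langle\bV,\bbf\rangle-\theta$ and the Taylor expansion via $\nabla_\bV\theta=\bbf$, and the final $O(h^{d+2k+1})$ face estimate from the product of the jump and the Lipschitz flux defect are all exactly the paper's argument. Your closing accuracy bookkeeping for $\alpha$ and $\mathbf{r}_\sigma$ is also the paper's; the only cosmetic looseness is the phrase ``absorbed'' for (II)--(III), but this mirrors the paper's own handling once the sharper quadrature hypotheses of the proposition are invoked.
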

\begin{remark}
\begin{enumerate}
\item We do not use the fact we are solving a steady problem: the same would be true for the semi-discrete unsteady problem.
\item We have only used approximation properties: the same would be true when non linear limiting is applied, provided the formal order is kept for sommth solutions
\end{enumerate}
\end{remark}
\subsubsection{Non linear RD schemes}
Let us recall that for the non linear RD schemes defined in appendix \ref{RDS}, the accuracy condition is that the boundary integrals are of order $k-1$, since then
$$\oint_{\partial K}\hbbf_\bn(\bV_\Kp^h, \bV_\Km^{h})\; d\gamma=O(h^{k+d}).$$ If $\hbbg_\bn$ is a Lipschitz continuous flux that is consistent with the entropy flux, we also have
$$\oint_{\partial K}\hbbg_\bn(\bV_\Kp^h, \bV_\Km^{h})\; d\gamma=O(h^{k+d}).$$
Let us look at $\mathcal{E}$ in more detail when $\Phi_\sigma=\mathbf{\beta}_\sigma\Phi$, $\sum\limits_{\sigma\in K}\mathbf{\beta}_\sigma=\text{Id}$. 
We have, defining again $\hbbg=\langle \{\bV\},\hbbf\rangle-\theta(\{\bV^h\})$ and $\BV=\sum\limits_{\sigma\in K}\mathbf{\beta}_\sigma \bV_\sigma$, and taking into account we assume a smooth steady solution
\begin{equation*}
\begin{split}
\mathcal{E}&=\oint_{\partial K} \hbbg_n(\bV_\Kp^h, \bV_\Km^{h})\; d\gamma-\oint_{\partial K} \langle\BV, \hbbf_\bn(\bV_\Km^h, \bV_\Km^{h})\; d\gamma\\
&=\oint_{\partial K} \bigg (\hbbg_n(\bV_\Kp^h, \bV_\Km^{h})-\bbg(\bV)\bigg )\; d\gamma-\oint_{\partial K}\bigg (  \langle\BV, \hbbf_\bn(\bV_\Kp^h, \bV_\Km^{h})-\langle \BV, \bbf(\bV)\rangle \bigg )\; d\gamma.
\end{split}
\end{equation*}
We look at each term.
First, using Taylor formula, and the fact that $\nabla_\bV \theta(\bV)=\bbf(\bV)$, 
\begin{equation*}
\begin{split}
\hbbg_n(\bV_\Kp^h, \bV_\Km^{h})-\bbg(\bV)&=\langle\{\bV^h\},\hbbf_\bn(\bV_\Kp^h, \bV_\Km^{h})\rangle-\theta(\{\bV^h\}-\langle \bV, \bbf(\bV)\rangle +\theta(\bV)\\
&=\langle\{\bV^h\}-\bV, \bbf(\bV)\rangle+\langle\{\bV^h\}, \hbbf_\bn(\bV_\Kp^h, \bV_\Km^{h})\rangle-\bbf(\bV)\rangle\\
&\qquad +\langle \nabla_\bV \theta(\bV), \bV-\{\bV^h\}\rangle+O\big ((\bV-\{\bV^h\})^2\big )\\
&=\langle\{\bV^h\}, \hbbf_\bn(\bV_\Kp^h, \bV_\Km^{h})-\bbf(\bV)\rangle+O\big ((\bV-\{\bV^h\})^2\big )
\end{split}
\end{equation*}
so that
\begin{equation*}
\begin{split}
\hbbg_\bn(\bV_\Kp^h, \bV_\Km^{h})-\bbg(\bV) -\langle \bV, \hbbf_\bn(\bV_\Km^h, \bV_\Km^{h})-\bbf(\bV)\rangle&=\langle \{\bV^h\}-\bV, \hbbf_\bn(\bV_\Kp^h, \bV_\Km^{h})-\bbf(\bV)\rangle +O\big ((\bV-\{\bV^h\})^2\big )\\
&=O(h)\times O(h^{k+1})
\end{split}
\end{equation*}
if the boundary quadrature formula is of order $k$.
We have
{\begin{proposition}
The scheme defined by \eqref{entropy:perturb}, \eqref{ent} is localy entropy conservative. In addition, $\mathcal{E}=\sum\limits_{\sigma\in K}\langle \bV_\sigma, \Phi_\sigma\rangle-\oint_{\partial K}\hbbg_\bn(\bV_\Kp^h, \bV_\Km^{h})\; d\gamma$, with
$$\hbbg_\bn(\bV_\Kp^h, \bV_\Km^{h})=\langle \{\bV\},\hbbf\rangle-\theta(\{\bV^h\})$$
satisfies 
\begin{equation}
\label{error:RD}
\mathcal{E}=O(h^{k+d+1})
\end{equation}if  the boundary quadrature formula is of order $k$.
\end{proposition}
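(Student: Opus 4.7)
The plan is to follow and complete the calculation already sketched just before the proposition statement. I would begin by rewriting $\mathcal{E}$ using the RD ansatz $\Phi_\sigma=\boldsymbol{\beta}_\sigma\Phi$ with $\sum_{\sigma\in K}\boldsymbol{\beta}_\sigma=\mathrm{Id}$, so that $\sum_{\sigma\in K}\langle \bV_\sigma,\Phi_\sigma\rangle=\langle\BV,\Phi\rangle$ with $\BV=\sum_{\sigma\in K}\boldsymbol{\beta}_\sigma^T\bV_\sigma$. Since $\Phi=\oint_{\partial K}\hbbf_\bn(\bV^h_\Kp,\bV^{h}_\Km)\,d\gamma$ and $\BV$ is constant on $K$, the product can be moved under the quadrature sign to give $\mathcal{E}=\oint_{\partial K}\bigl[\hbbg_\bn(\bV^h_\Kp,\bV^{h}_\Km)-\langle\BV,\hbbf_\bn(\bV^h_\Kp,\bV^{h}_\Km)\rangle\bigr]\,d\gamma$. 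This identity isolates the quantity whose size must be estimated.

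Next I would inject the exact smooth solution $\bV$ into the integrand by adding and subtracting $\bbg(\bV)\cdot\bn$ and $\langle\BV,\bbf(\bV)\cdot\bn\rangle$. The resulting expression splits into two pieces: a piece that vanishes up to higher order by exact calculus (because for a smooth steady solution $\mathrm{div}\,\bbf(\bV)=0$ and $\mathrm{div}\,\bbg(\bV)=0$, converting any surface integral of the exact fluxes into a volume integral of zero modulo quadrature error), and a piece comparing the numerical flux with the exact flux at the exact state. The first piece is bounded by the assumed boundary quadrature accuracy of order $k$ applied to the smooth function $\bbg(\bV)$ and $\langle\BV,\bbf(\bV)\rangle$, and it contributes $O(h^{k+d})$ times an extra factor coming from a vanishing mean, which I expect to be the main place where care is needed.

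The key algebraic step is then to use the definition $\hbbg_\bn=\langle\{\bV^h\},\hbbf_\bn\rangle-\theta(\{\bV^h\})$ together with the potential relation $\nabla_\bV\theta=\bbf$. Applying a one-variable Taylor expansion of $\theta$ around $\bV$ yields, as in the displayed computation,
\[
\hbbg_\bn(\bV^h_\Kp,\bV^{h}_\Km)-\bbg(\bV)-\langle\bV,\hbbf_\bn(\bV^h_\Kp,\bV^{h}_\Km)-\bbf(\bV)\rangle
=\langle\{\bV^h\}-\bV,\,\hbbf_\bn-\bbf(\bV)\rangle+O\bigl((\{\bV^h\}-\bV)^2\bigr).
\]
Because $\hbbf_\bn$ is Lipschitz and consistent, $\hbbf_\bn(\bV^h_\Kp,\bV^h_\Km)-\bbf(\bV)=O(h^{k+1})$; and $\{\bV^h\}-\bV=O(h)$ on $\partial K$ (even $O(h^{k+1})$ when $\bV\in\PP^k$). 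A final replacement of $\bV$ by $\BV$ uses $\BV-\bV=O(h)$ coming from consistency of the convex-type combination $\sum\boldsymbol{\beta}_\sigma\bV_\sigma$ with the smooth solution. Putting these bounds together, the integrand is pointwise of size $O(h)\cdot O(h^{k+1})=O(h^{k+2})$, so after integration against the boundary of an element of surface measure $O(h^{d-1})$ one recovers $\mathcal{E}=O(h^{k+d+1})$, as claimed.

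The main obstacle, as I see it, is bookkeeping the small factors: one must verify that the mean-value and Taylor expansions really do produce an extra $O(h)$ beyond the naive consistency estimate, and that neither the quadrature error nor the difference $\BV-\{\bV^h\}$ spoils this gain. The rest is essentially repeated application of Lipschitz continuity, the identity $\nabla_\bV\theta=\bbf$, and the assumed order-$k$ boundary quadrature, all of which are used in exactly the same way as in the preceding discontinuous Galerkin argument.
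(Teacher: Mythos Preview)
Your proposal is correct and follows essentially the same route as the paper: rewrite $\sum_\sigma\langle\bV_\sigma,\Phi_\sigma\rangle=\langle\BV,\Phi\rangle$ with $\BV$ constant on $K$, subtract the exact fluxes $\bbg(\bV)$ and $\langle\BV,\bbf(\bV)\rangle$ using the steady-state identities, and exploit the Taylor expansion of $\theta$ together with $\nabla_\bV\theta=\bbf$ to produce the extra factor $O(h)$ in the integrand. Your treatment is in fact slightly more explicit than the paper's --- you correctly isolate the passage from $\BV$ to $\bV$ (which the paper does silently) and you flag the quadrature-error contribution from $\oint_{\partial K}\bbg(\bV)\cdot\bn$ and $\oint_{\partial K}\langle\BV,\bbf(\bV)\cdot\bn\rangle$, which the paper also leaves implicit when invoking the steady-state assumption.
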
}

\begin{remark}
Here we must use the fact we are solving a steady problem.
\end{remark}

\subsection{Construction of entropy stable schemes}\label{sec:construction}
Starting form the residuals $\{\Phi^K_\sigma\}_{K\in \mathcal{T}_h, \sigma\in K}$, we want to construct a scheme $\{\Phi^{'K}_\sigma\}_{K\in \mathcal{T}_h, \sigma\in K}$, that is entropy stable, i.e., for any $K$,
$$\sum\limits_{\sigma\in K}\langle\bV_\sigma,\Phi^{' K}_\sigma (\bV^h) \rangle\geq \oint_{\partial K}\hbbg_\bn(\bV_\Kp^h,\bV_\Km^{h})\; d\gamma.$$
Starting from the previous construction, and dropping the superscript $K$ because we will consider only quantities in $K$, the idea is to write the new residuals as:
\begin{equation}
\label{RD_Stable}
\Phi'_\sigma(\bV^h)=\Phi_\sigma+\mathbf{r}_\sigma+\Psi_\sigma
\end{equation}
where the $\mathbf{r_\sigma}$ are defined by \eqref{ent} and the $\Psi_\sigma$ satisfy
$$\sum\limits_{\sigma\in K}\Psi_\sigma=0 \text{ and } \sum\limits_{\sigma\in K}\langle\bV_\sigma,\Psi_\sigma(\bV^h)\rangle\geq 0$$
without violating the accuracy condition \eqref{eq: residual accuracy}.

Let us consider these two expressions for $\Psi_\sigma(\bV^h)$ which obviously satisfy the conservation requirement
$$\sum\limits_{\sigma\in K} \Psi_\sigma(\bV^h)=0$$
since $\sum\limits_{\sigma\in K}\varphi_\sigma=1$:
\begin{enumerate}
\item With jumps: We define 
\begin{equation}
\label{entropy:sol1}
\Psi_\sigma=\theta h_K^2\oint_{\partial K} [\nabla_\bx\varphi_\sigma]\cdot [\nabla_\bx \bV^h]\; d\gamma
\end{equation}
We have 
$$\sum\limits_{\sigma\in K} \langle\bV_\sigma,\Psi_\sigma(\bV^h)\rangle=\theta\; h_K^2\sum_{e \text{ edge of K}}\oint_K[\nabla_\bx \bV^h]^2\; d\gamma\geq 0$$
for any $\theta>0$. It is also easy to check that if $\bV^h$ is an approximation of order $k$ of a smooth $\bV$, we have, \emph{provided the quadrature formula is of order $k$} that
$$\Psi_\sigma=O(h^{k+d})$$
so that the accuracy requirement are met.
\item With streamline: we define
\begin{equation}
\label{entropy:sol2}
\Psi_\sigma=\theta h_K\oint_K \big ( \nabla_\bV\bbf(\bV^h)\cdot \nabla_\bx \varphi_\sigma\big ) \; \tau_K \big (\nabla_\bV\bbf(\bV^h)\cdot \nabla_\bx \bV^h \big ) \; d\bx.
\end{equation}
Clearly,
$$\sum\limits_{\sigma\in K} \langle\bV_\sigma,\Psi_\sigma(\bV^h)\rangle=\theta h_K \oint_K \big ( \nabla_\bV\bbf(\bV^h)\cdot \nabla_\bx \bV_\sigma\big ) \; \tau_K \big (\nabla_\bV\bbf(\bV^h)\cdot \nabla_\bx \bV^h \big ) \; d\bx\geq  0
$$
for any $\theta>0$ and any $\tau>0$
In practice, we take $\tau_K$ as the scalar/matrix defined by 
$$\tau^{-1}=\sum_{\sigma'\in K} K_{\sigma'}^-, K_\sigma=\dfrac{1}{|K|}\int_K \overline{\nabla_\bu\Bf}\cdot\nabla_\bx \varphi_\sigma\; d\bx, \qquad K_\sigma^+=\max(K_\sigma,0), K_\sigma^-=\min(K_\sigma,0).$$
It is also easy to check that if $\bV^h$ is an approximation of order $k$ of a smooth $\bV$, we have, \emph{provided the quadrature formula is of order $k$} that
$$\Psi_\sigma=O(h^{k+d})$$
so that the accuracy requirement are met.
\end{enumerate}

In practice, the quadrature formula involved in the edge integrals and the surface integrals need to be such that:
\begin{itemize}
\item For $\oint_e [\nabla_\bx \bV^h]^2\; d\gamma$: if $\bV^h$ is not constant, 
$$\oint_e [\nabla\bV^h]^2\; d\gamma>0.$$
\item For $\oint_K \big ( \nabla_\bV\bbf(\bV^h)\cdot \nabla_\bx \bV^h\big ) \; \tau_K \big (\nabla_\bV\bbf(\bV^h)\cdot \nabla_\bx \bV^h \big ) \; d\bx$: if 
$\nabla_\bV\bbf(\bV^h)\cdot \nabla_\bx \bV^h$ is not identicaly $0$, then
$$\oint_K \big ( \nabla_\bV\bbf(\bV^h)\cdot \nabla_\bx \bV_\sigma\big ) \; \tau_K \big (\nabla_\bV\bbf(\bV^h)\cdot \nabla_\bx \bV^h \big ) \; d\bx>0.$$
In \cite{abgrallLarat}, we have provided minimal conditions on these quadrature formula, such that this conditions are met. \red{Note that these quadrature formulas may be non-consistent: the only things we need is to keep the accuracy, and have a strictly positive entropy production.}
\end{itemize}

\section{Numerical illustrations}{
\label{numerical:solutions}
In this section, we illustrate the behaviour of the methods on  non linear examples.  We first study the conservation issues, for mass and entropy, and then show the behavior of the scheme on a Burger-like problem.
\subsection{Conservation}
In $\Omega=[-20,20]\times[-20,20]$ we look at the problem
\begin{subequations}
\label{sqrt}
\begin{equation}
\label{sqrt.1}\dpar{u}{t}+\dpar{\sqrt{u}}{x}+\dpar{u}{y}=0
\end{equation}
with 
\begin{equation}
\begin{split}
\label{sqrt.2}u_0(x,y)&=\left\{ \begin{array}{cc}
1 & \text{ if } r=\sqrt{(x+5)^2+(y+5)^2}>0.5\\
1+2\sin\big ( \frac{\pi}{4}(1-2r)\big ) &\text{ else.}
\end{array}\right .
\\
u(x,y,t)&=u_0(x,y) \text{ for } t\geq 0, \text{ and } \bx\in \partial\Omega .
\end{split}
\end{equation}
\end{subequations}
 We integrate for $t\in [0,5]$: at time $t=5$, the non constant part of the solution has not reached the boundary, this is why we have taken such a large domain. We want to avoid any interference with the boundary conditions. The flux is non polynomial, so that any standard quadrature rule will \emph{never} be exact.

Following the notations of \eqref{RD:scheme},
we integrate in time with  an Euler forward method
\begin{equation}
\label{euler:forward}
\bu_\sigma^{n+1}=\bu_\sigma^n-\dfrac{\Delta t}{C_\sigma}\bigg ( \sum\limits_{K\subset\Omega, \sigma\in K}\Phi_\sigma^K(\bu^{n,h})+\sum\limits_{\Gamma\subset\partial\Omega, \sigma\in \Gamma}\Phi_\sigma^\Gamma(\bu^{n,h})\bigg ),
\end{equation}
where $C_\sigma$  is a dual control volume. This method is not accurate in time, but we can check numerically the conservation of mass,
$$
M=\sum_{\sigma\in\Omega}|C_\sigma| \bu_\sigma.
$$
An entropy for \eqref{sqrt.1} is $U(\bu)=\frac{\bu^2}{2}$, the entropy flux is $\bbg(\bu)=(\frac{\bu^2}{2}, \frac{1}{3}\bu \sqrt{\bu}).$
Since $$\langle \bV(\bu^n), \bu^{n+1}-\bu^n\rangle\leq U(\bu^{n+1})-U(\bu^{n})$$
we will not get conservation of entropy, but we will check that, the entropy residuals defined by \eqref{entropy:perturb} $\Phi_\sigma'$ are such that
\begin{equation}
\label{entropy_flux}
\sum_{\sigma\in \Omega} \bigg ( \sum\limits_{K\subset\Omega, \sigma\in K}\langle \bV_\sigma,\Phi_\sigma^K(\bu^{n,h})\rangle +\sum\limits_{\Gamma\subset\partial\Omega, \sigma\in \Gamma}\langle \bV_\sigma, \Phi_\sigma^\Gamma(\bu^{n,h})\rangle \bigg ) =0.
\end{equation}
Up to our knowledge, any of  the  so-called entropy conservative schemes show rigorous conservation of the spatial discretisation terms only. To get also entropy conservation at the time-discrete level, it seems that the scheme must be implicit in time, see \cite{hiltebrand14:_entrop_galer}. This is the reason why we are not showing 
$$E=\int_{\Omega} U(\bu^h)\; d\bx$$ but  how well \eqref{entropy_flux} is fullfiled in figure \ref{entropy:conservation}.
\begin{figure}[h]
\begin{center}
\subfigure[pure Galerkin, mass]{\includegraphics[width=0.45\textwidth]{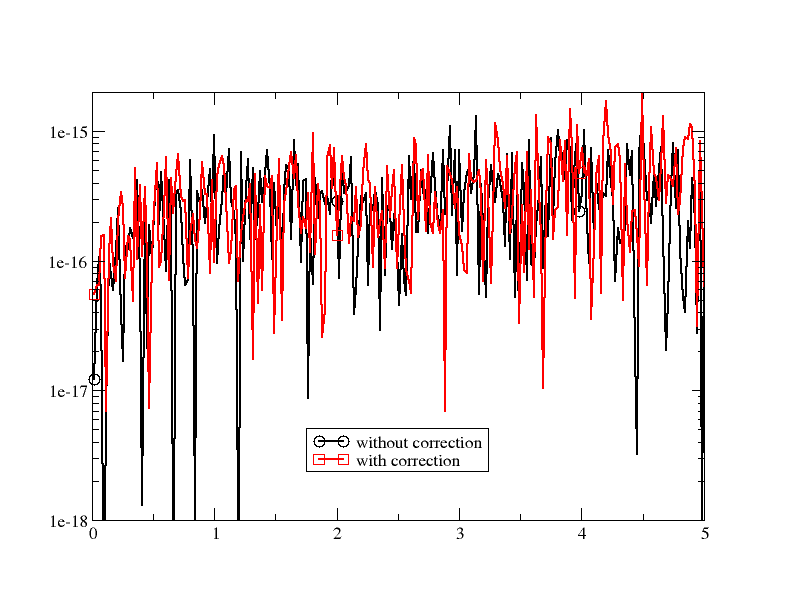} }
\subfigure[pure Galerkin, entropy]{\includegraphics[width=0.45\textwidth]{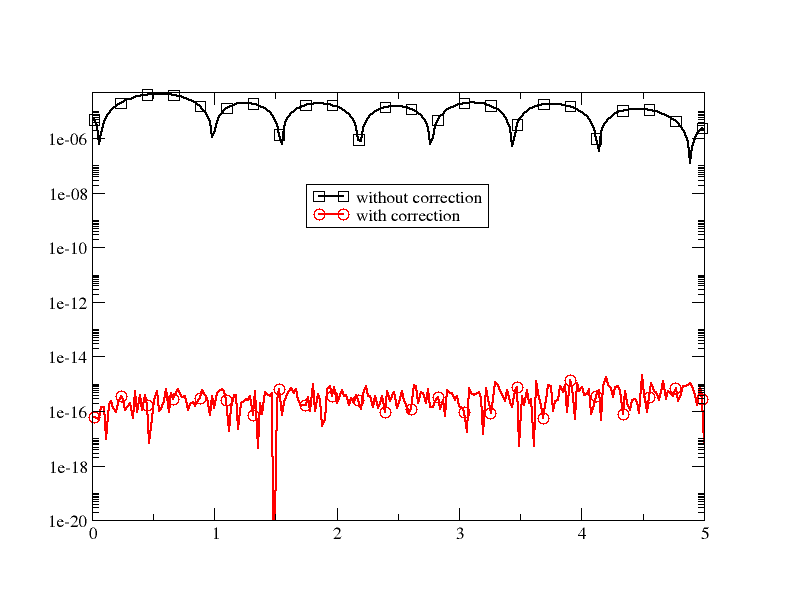}}
\end{center}
\caption{\label{entropy:conservation} Plot of \eqref{entropy_flux} for the pure galerkin scheme, with and without the correction}
\end{figure}
We see that the mass is always conserved up to machine accuracy, while the correction has a clear effect on the spatial conservation of entropy. This was shown for the pure Galerkin scheme.

\subsection{Accuracy}\label{sec:accuracy}
In this section we want to show that the correction does not spoil the accuracy. This is done on the following test case: in $\Omega=[0,1]\times [0,1]$, 
\begin{subequations}\label{expo}
\begin{equation}
\label{expo:1}
\dpar{\sinh u}{x}+\dpar{u}{y}=0 
\end{equation}
with the boundary condition, for $y=0$,
\begin{equation}
\label{expo:2}
u(x,0)=x-\frac{1}{2}
\end{equation}
\end{subequations}
The solution is regular and computed by the method of characteristics. We have chosen the exponential flux to make sure that none of the standard quadrature formula is exact. The quadrature formula are:
\begin{itemize}
\item Linear approximation. the quadrature point in the triangle $K$ is its centroid, and the weight is $1$. On the boundary, we have chosen the Gaussian formula associated to the Legendre polynomials of degree 2: there are two points of weight $\tfrac{1}{2}$ that are the image of $\pm \tfrac{1}{\sqrt{3}}$ in the mappings form $[-1,1]$ to any of the edges of $K$.
\item Quadratic approximation. The surface integrals are computed with $7$ quadrature points, the formula is exact for degree 5
\begin{itemize}
\item The centroid with weight $0.225$,
\item The points defined by the barycentric coordinates $(a,b,b)$,
$$a=0.797426985353087, \qquad a+2b=1$$
and cyclic permutations of the coordinates. The weights are
$\omega=0.125939180544827.$
\item The points defined by the barycentric coordinates $(a,b,b)$,
$$a=0.059715871789770, \qquad a+2b=1$$
and cyclic permutations of the coordinates. The weights are
$\omega=0.132394152788506.$
\end{itemize}
The integral on the edges are evaluated by the same Gaussian formula as before.
\end{itemize}
We have used the scheme \ref{RD_Stable} where $\Phi_\sigma$ is obtained by the Galerkin formulation using the above quadrature and $\Psi_\sigma$ is defined by \eqref{entropy:sol1} with $\theta=0.01$. The boundary conditions are implemented with a local Lax Friedrich numerical scheme.
The errors are given in table \ref{table:error:1} and \ref{table:error:2}.

\begin{table}[h]
  \begin{center}
  \begin{tabular}{|cc||cc|cc|cc|}\hline
   $h $  & $n_{dofs}$& $L^1$ & slope & $L^2$ & slope & $L^\infty$ & slope\\
  \hline
    $7.06\; 10^{-2}$ & 525 & $9.16\; 10^{-5} $& $-$& $1.39\; 10^{-4}$ & $-$& $1.14\; 10^{-3}$ &$-$ \\
  $3.53\; 10^{-2}$ &2017 & $2.12\; 10^{-5}$ &$2.11$ & $3.30\; 10^{-5}$&$2.07$ & $3.23\; 10^{-4}$ &$1.81$ \\
  $1.76\; 10^{-2}$ &7905& $5.04\; 10^{-6} $&$2.06$ & $7.96\; 10^{-6}$ &$2.04$ & $8.75\; 10^{-5}$ &$1.87$ \\
  $8.82\; 10^{-3}$ &$31297$& $1.51\; 10^{-6} $&$1.74$ & $2.76\; 10^{-6}$ &$1.53$ & $3.13\; 10^{-5}$ &$1.5$ \\
  \hline
  \end{tabular}
  \end{center}
    \caption{\label{table:error:1} Error for problem \eqref{expo}, linear approximation.}
\end{table}

\begin{table}[h]
  \begin{center}
  \begin{tabular}{|cc||cc|cc|cc|}
  \hline
  $h $  & $n_{dofs}$& $L^1$ & slope & $L^2$ & slope & $L^\infty$ & slope\\
  \hline
    $1.41\; 10^{-1}$        & 525   & $1.35\;10^{-5}$ &$-$       &  $2.46 \;10^{-5} $&$-$ & $2.15\;10^{-4} $&$-$ \\
  $7.06 \;10^{-2}$& 2017 & $1.88\;10^{-6}$ &$2.84$ & $3.62 \;10^{-6}$  & $2.76$& $4.08 \;10^{-5}$&$2.39$ \\
  $3.53 \;10^{-2}$& $7905$ & $2.33 \;10^{-7}$ &$3.01$& $4.77 \;10^{-7}$&$2.92$&$ 6.28 \;10^{-6}$&$2.69$ \\
  $1.76 \;10^{-2}$ & $31297$ & $3.12 \;10^{-8}$ &$2.88$ &  $6.35\; 10^{-8}$ &$2.90$ & $8.88\; 10^{-7}$&$2.81$\\
  \hline
  \end{tabular}
  \end{center}
    \caption{\label{table:error:2} Error for problem \eqref{expo}, quadratic approximation.}\end{table}

As expected the optimal orders are reached. We can also observe the improved accuracy for a given number of freedom, between second and third order.

\subsection{Behavior on discontinuous solutions}
The non linear example is an adaptation of the Burgers equation:
\begin{equation}
\label{burger:1}
\begin{array}{cc}
\dpar{u}{y}+\dpar{\sinh u}{x}=0 & \text{if }x\in [0,1]^2\\
u(x,y)=\frac{1}{2}-x &\text{on the inflow boundary}.
\end{array}
\end{equation}
If the $y$-coordinate corresponds to time, we are back to a more standard formulation.

The exact solution consists in a fan that merges into a shock.  We have performed the simulations using a quadratic approximation. The mesh (i.e. all the degrees of freedom) is displayed in figure \ref{mesh}.
\begin{figure}[h]
\begin{center}
\includegraphics[width=0.45\textwidth]{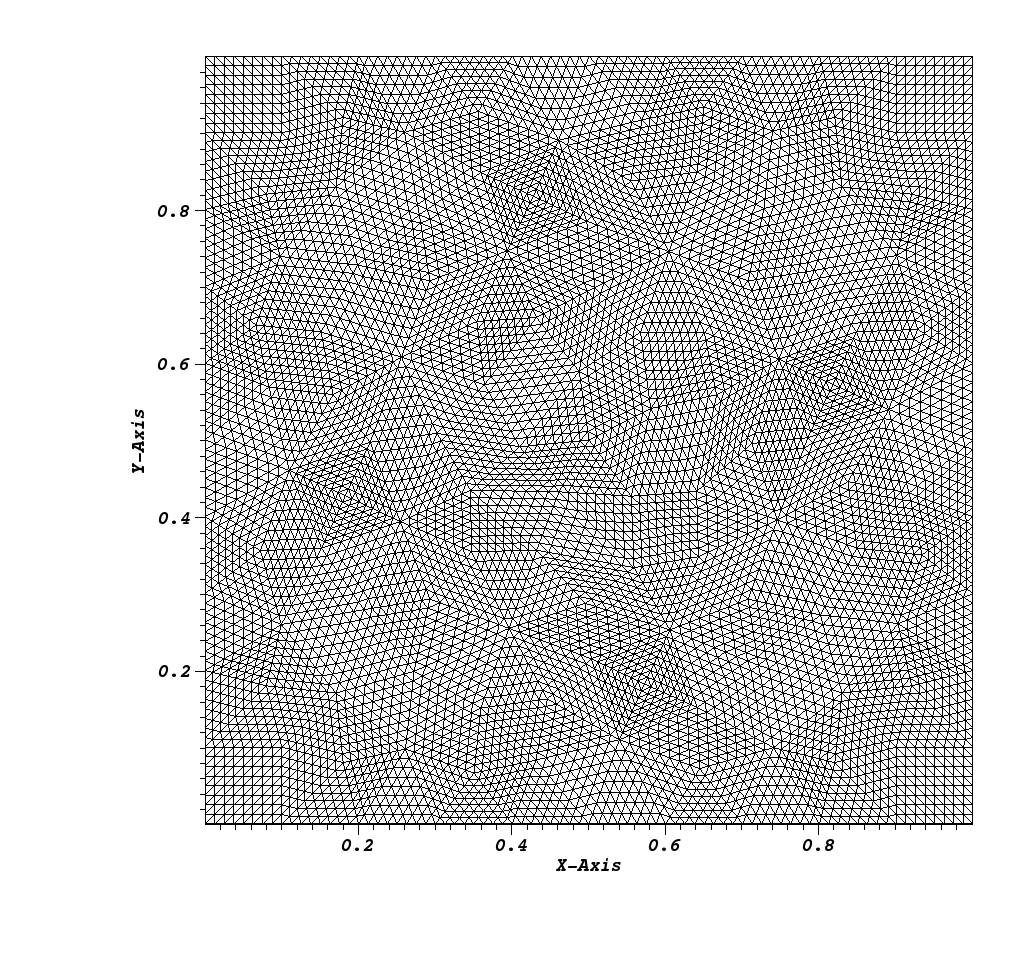}\end{center}
\caption{\label{mesh} Mesh for the numerical experiments. All the Lagrange degrees of freedom are represented. From the  representation, we evaluate the solution at the $\PP^2$ degrees of freedom and then plot the solution using a local $\PP^1$ representation in all the triangles represented on the figure. }
\end{figure}
\begin{figure}[h]
\begin{center}
\subfigure[SUPG-$\theta=0.1$]{\includegraphics[width=0.45\textwidth]{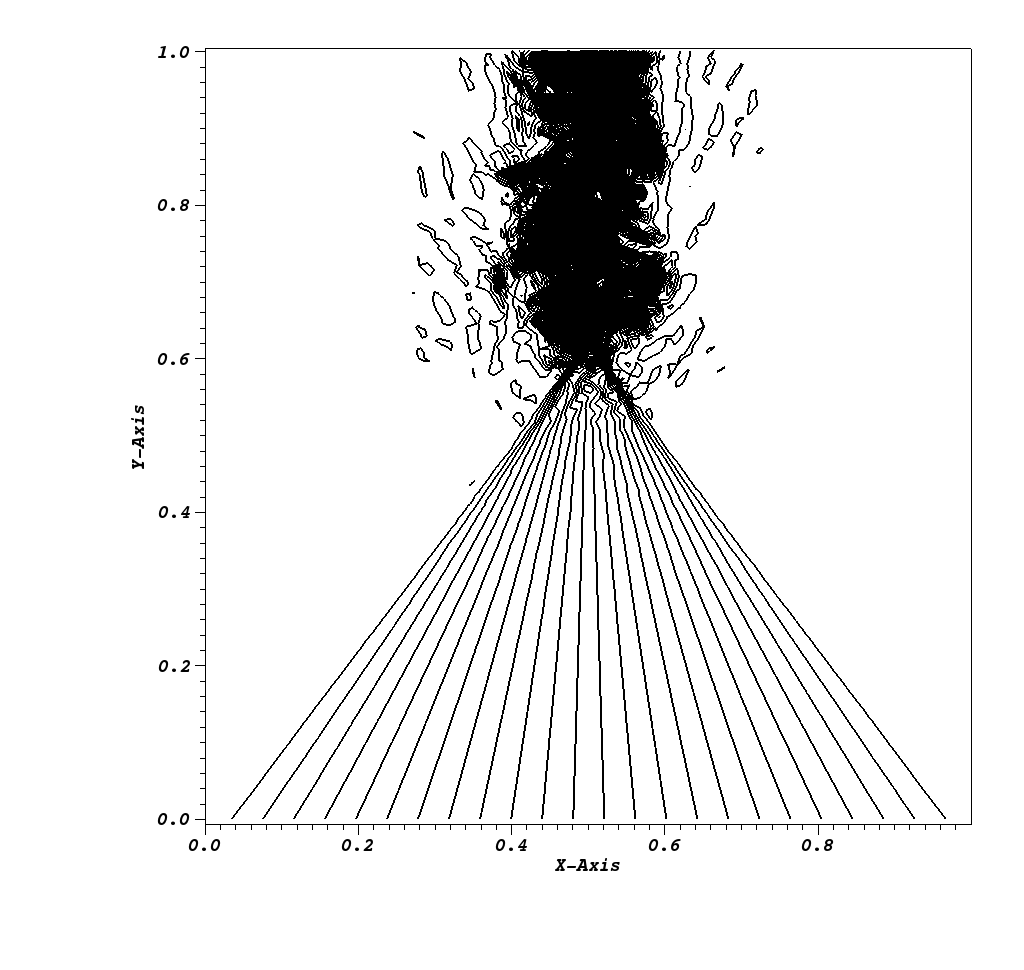}}
\subfigure[SUPG-$\theta=0.01$]{\includegraphics[width=0.45\textwidth]{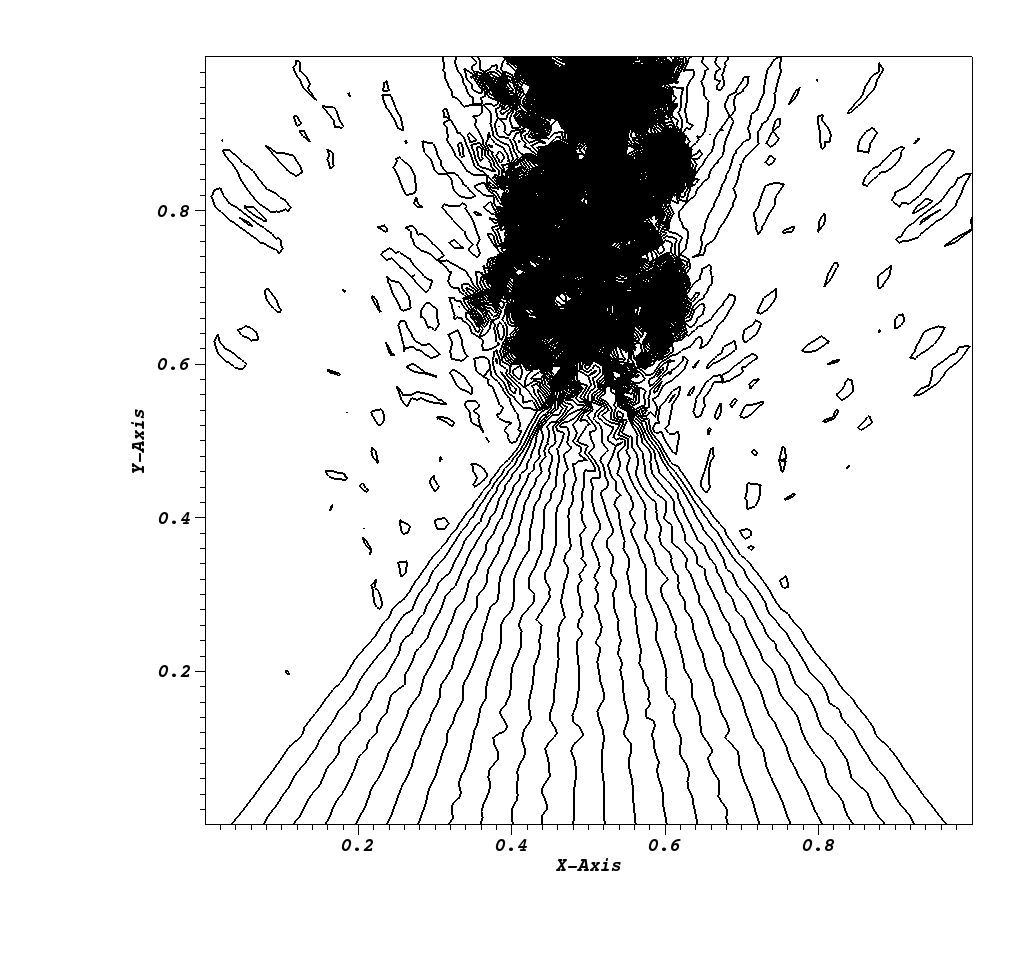}}
\subfigure[SUPG-$\theta=0.02$]{\includegraphics[width=0.45\textwidth]{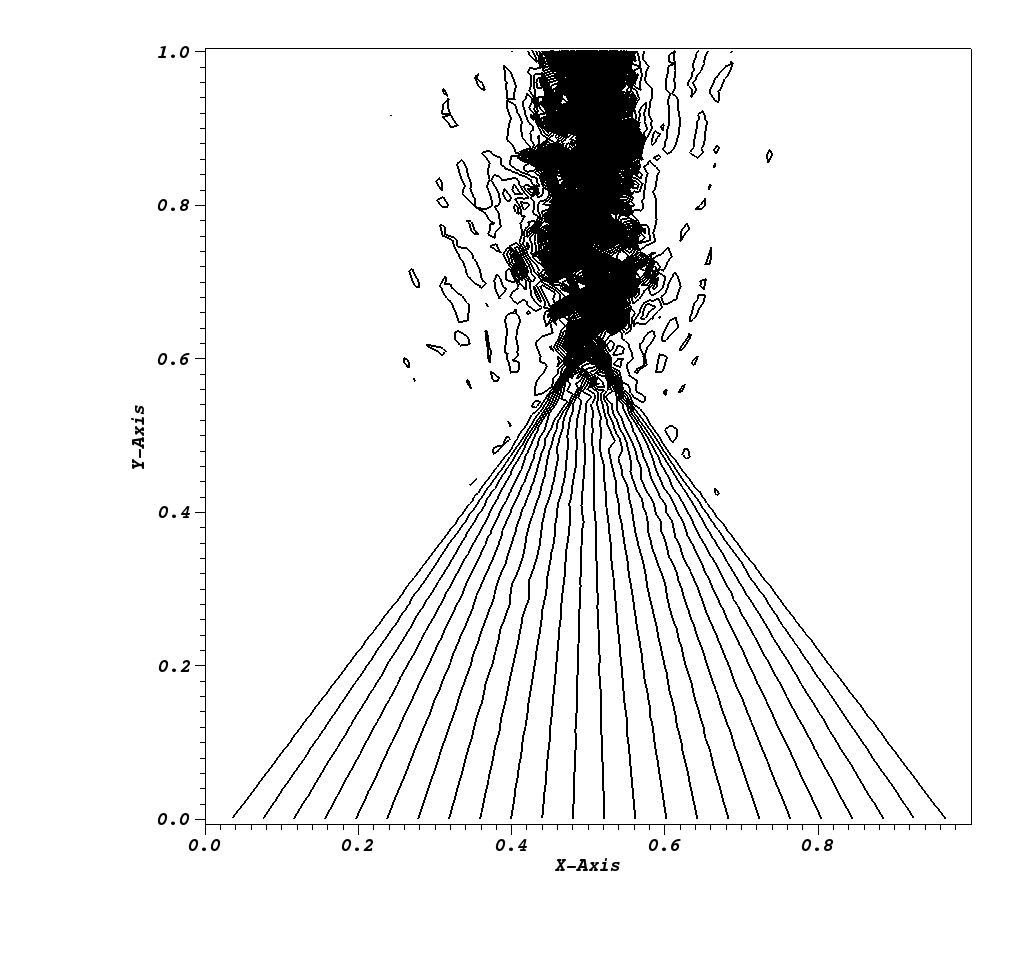}}
\subfigure[GALJ-$\theta=0.01$]{\includegraphics[width=0.45\textwidth]{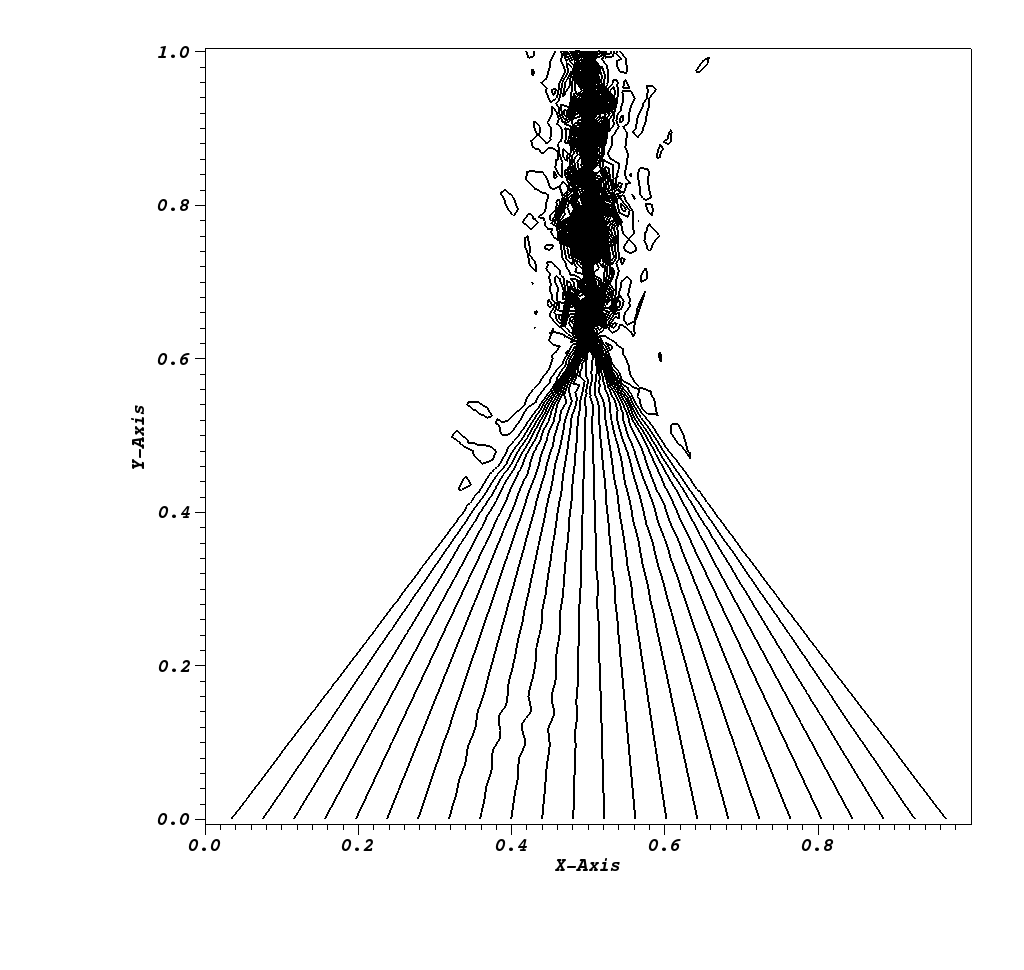}}
\end{center}
\caption{\label{without:Gal} Results without entropy correction, linear schemes}
\end{figure}
\clearpage
 The numerical solutions are obtained by several methods: 
 \begin{itemize}
 \item The SUPG scheme with different values of  $\theta$ in \eqref{entropy:sol2}. 
 \item The Galerkin scheme with jumps and different values of $\theta$ in  \eqref{entropy:sol1}
 \item The non-linear RD scheme where the first order scheme is the Rusanov one (see appendix \ref{RDS})
 \item The non-linear RD scheme with jump filtering, \eqref{schema RDS jump}, the jumps being  tuned by $\theta$
 \item The non-linear RD scheme with streamline  filtering, \eqref{schema RDS SUPG}, the jumps being  tuned by $\theta$
 \end{itemize}
 These schemes are nicknamed respectively  as SUPG-$\theta$, GalJ-$\theta$,  RD, RD-J-$\theta$ and RD-S-$\theta$. We also have made a second set of simulations where the schemes are modified by adding the entropy correction. More precisely, this is done after the evaluation of the Galerkin term for the SUPG-$\theta$ and GALJ-$\theta$ schemes, and after the nonlinear RD step of the RD-J-$\theta$ and RD-S-$\theta$ steps, i.e. before the filtering step in all cases, as described in section \ref{sec:construction}. These schemes are nicknamed as  SUPG-E-$\theta$, GalJ-E-$\theta$,  RD, RD-J-E-$\theta$ and RD-S-E-$\theta$ respectively. Since the numerical flux is not polynomial, the Galerkin step is only approximate: the entropy correction is a priori acting. We have used the same quadrature points as in section \ref{sec:accuracy}.
 
  Our goal is to see if we can lower the parameter $\theta$ in the filtering term because we have a fine tuning of the local entropy production. All the simulations are done with the Euler forward time stepping, CFL=$0.3$. The plots use the same isolines between $2$ and $-2$, 100 isolines: we can see the numerical oscillations, if any.
 
\begin{figure}[h]
\begin{center}
\subfigure[RD scheme]{\includegraphics[width=0.45\textwidth]{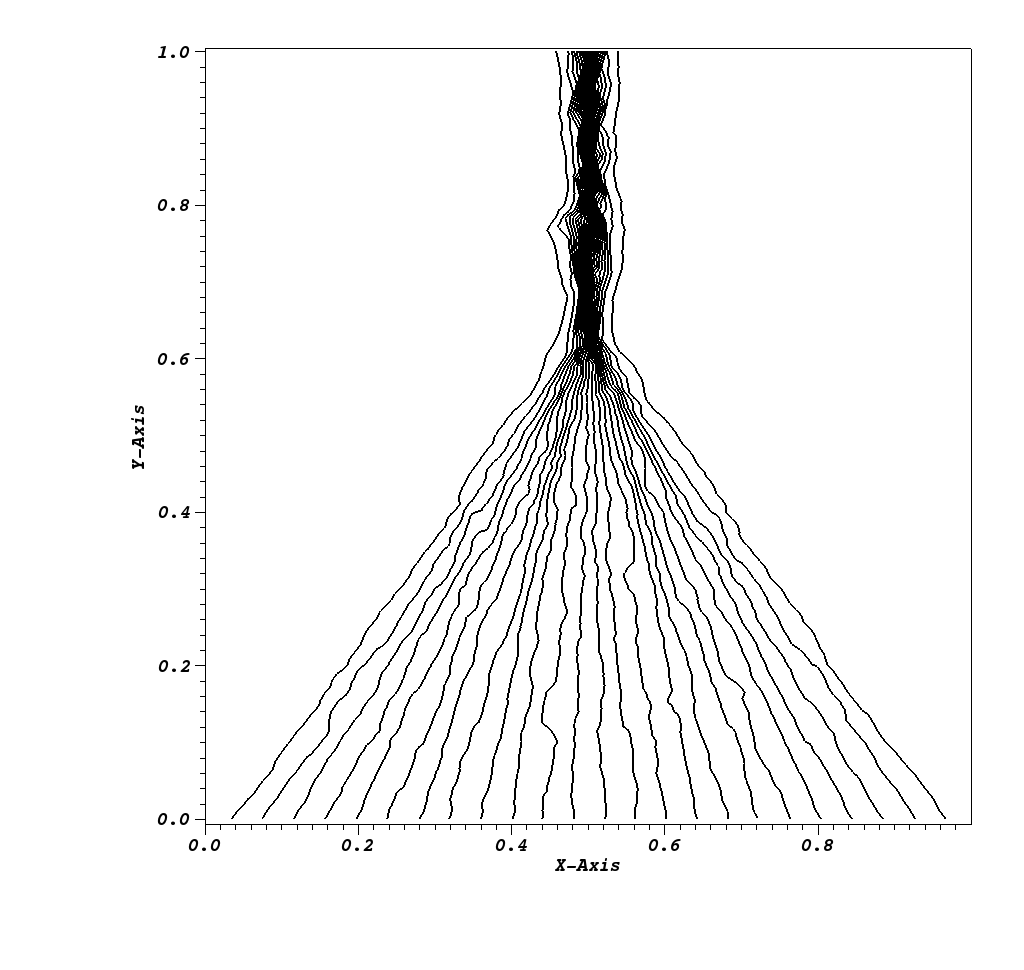}}
\subfigure[RDJ-$0.1$]{\includegraphics[width=0.45\textwidth]{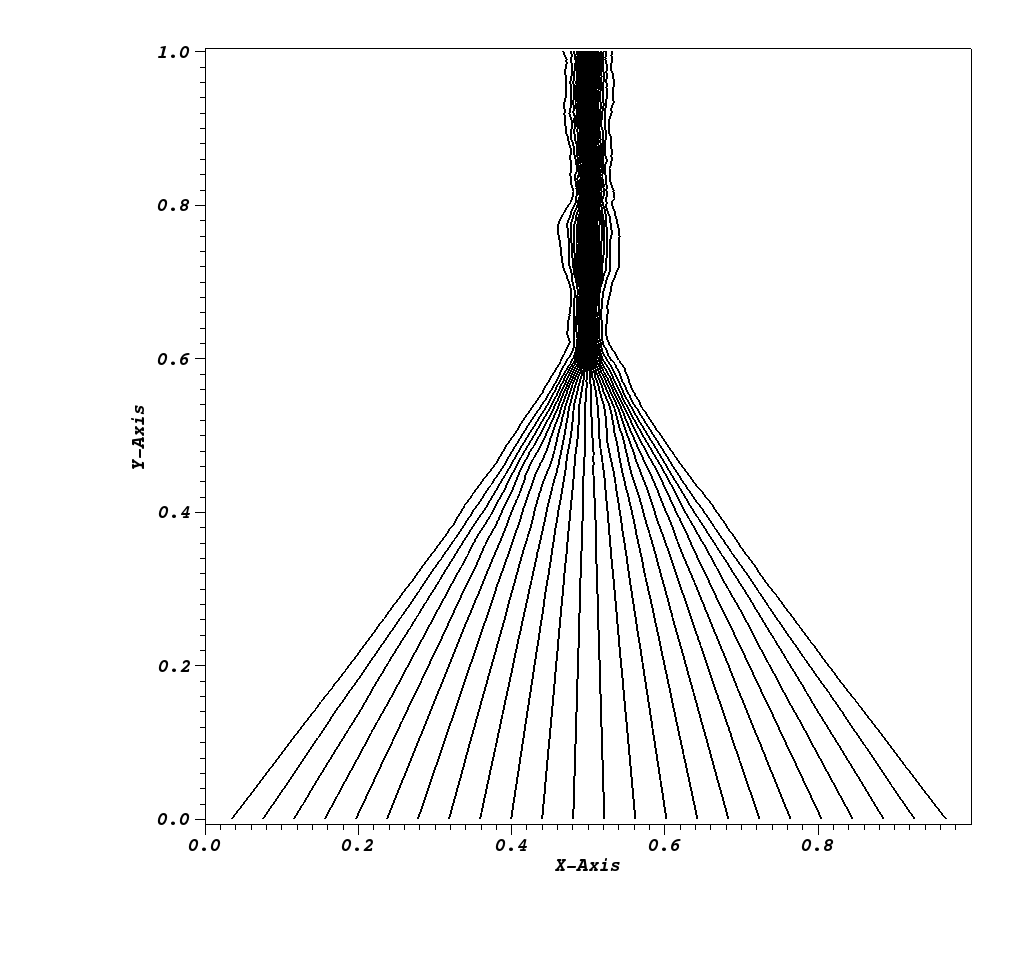}}
\subfigure[RDJ-$0.02$]{\includegraphics[width=0.45\textwidth]{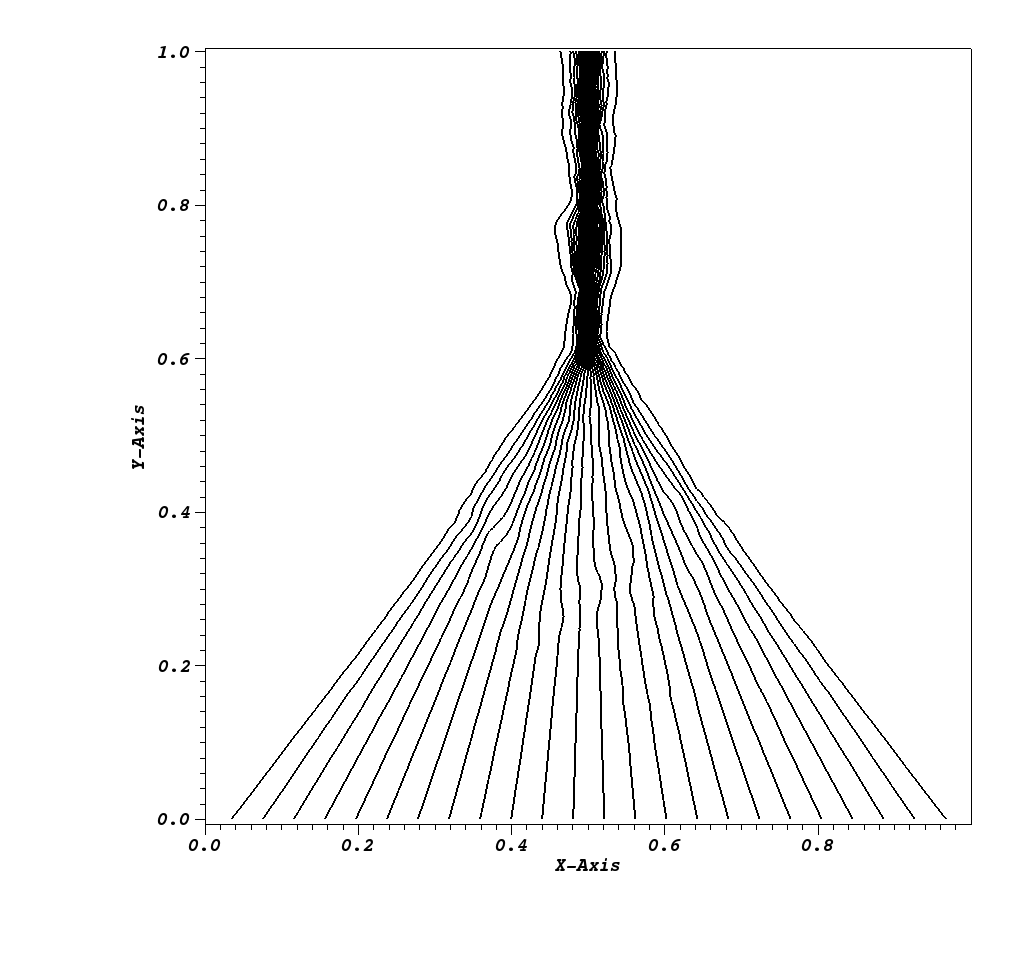}}
\subfigure[RDJS-$0.02$-$0.02$]{\includegraphics[width=0.45\textwidth]{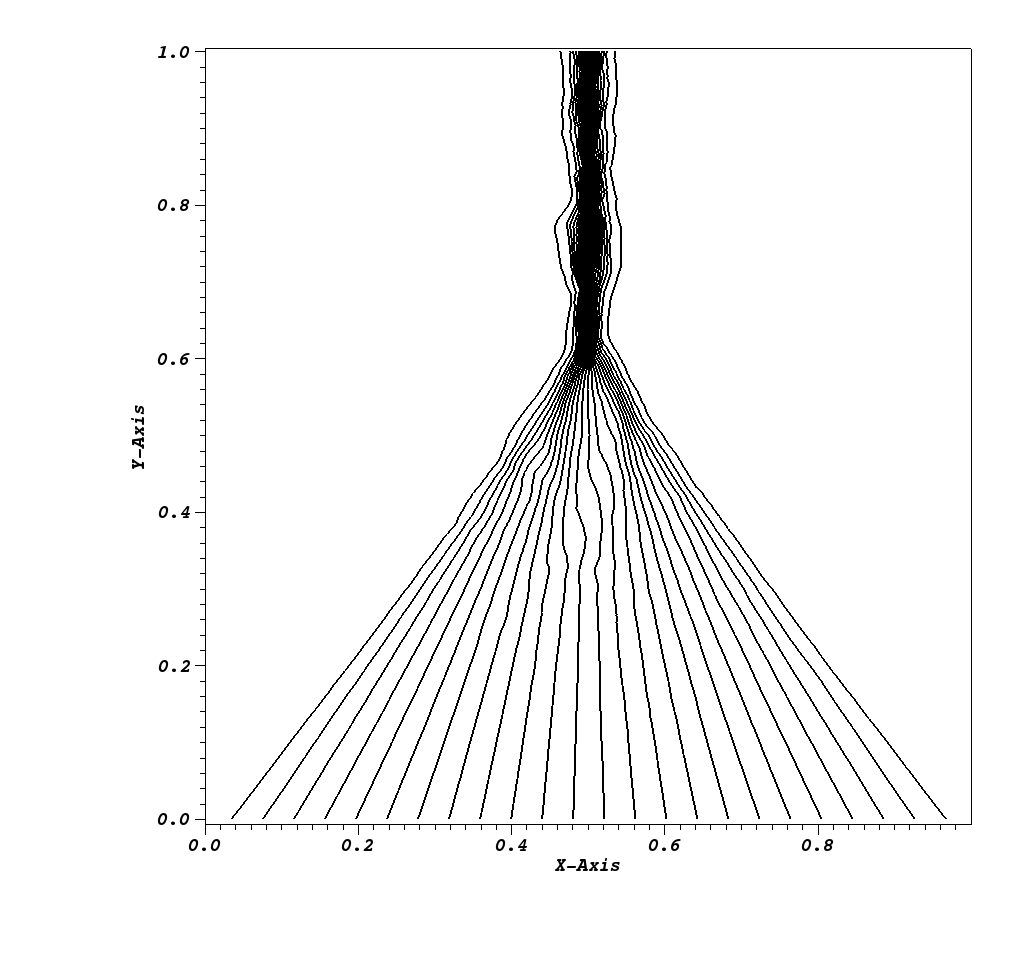}}
\end{center}
\caption{\label{without:RD} Results without entropy correction, non linear schemes}
\end{figure}
\clearpage
From figure \ref{without:Gal}, we see that the SUPG scheme without correction and $\theta_S=0.1$ provides very good results in the fan but is almost unstable in the shock (the calculation has been stopped since it is meaningless). When we lower the parameter $\theta$ to $0.01$, the shock does not improve, and some wiggles exist in the fan, so we are not filtering enough. A relatively good compromise is reached for $\theta=0.02$. This is in contrast with the GalJ-0.01 result that are comparable to the SUPG-0.02 ones (with smaller oscillations at the shock, by the way).

From figure \ref{without:RD}, we see that $\theta=0$ leads to wiggles in the fan. They are due to spurious modes, and not a stability problem, see \cite{remi_ENORD,waterloo}. We have tried several values of $\theta$ for the streamline and the jump term, it seems that the best results are obtained for $\theta=0.1$, both for the fan and the discontinuity (remember that we have $100$ isolines between $-2$ and $2$, so $25$ between $-0.5$ and $0.5$.

From figure \ref{with}, we see that Gal-E-$0.01$ behaves much better that SUPG-E-$0.01$. This is an other indication (comparing to figure \ref{without:Gal}-(b) that the streamline term is not suffisant. The solution is much better with $\theta=0.02$, see figure \ref{with}-(c), but it is similar to \ref{without:Gal}-(c). We have also experienced similar disapointments with the unsteady version of these schemes in \cite{Abgrall2017} where the jump term seems to filter out more efficiently.
 Comparing figures \ref{without:RD}-(c) and \ref{with}-(d), we see that the quality of the fan has improved  with the entropy correction, while, of course, the shock becomes wiggly. If we decrease the value of $\theta$, then the solution becomes similar to \ref{with}-(b).
\begin{figure}[h]
\begin{center}
\subfigure[GALJ-E-$\theta=0.01$]{\includegraphics[width=0.45\textwidth]{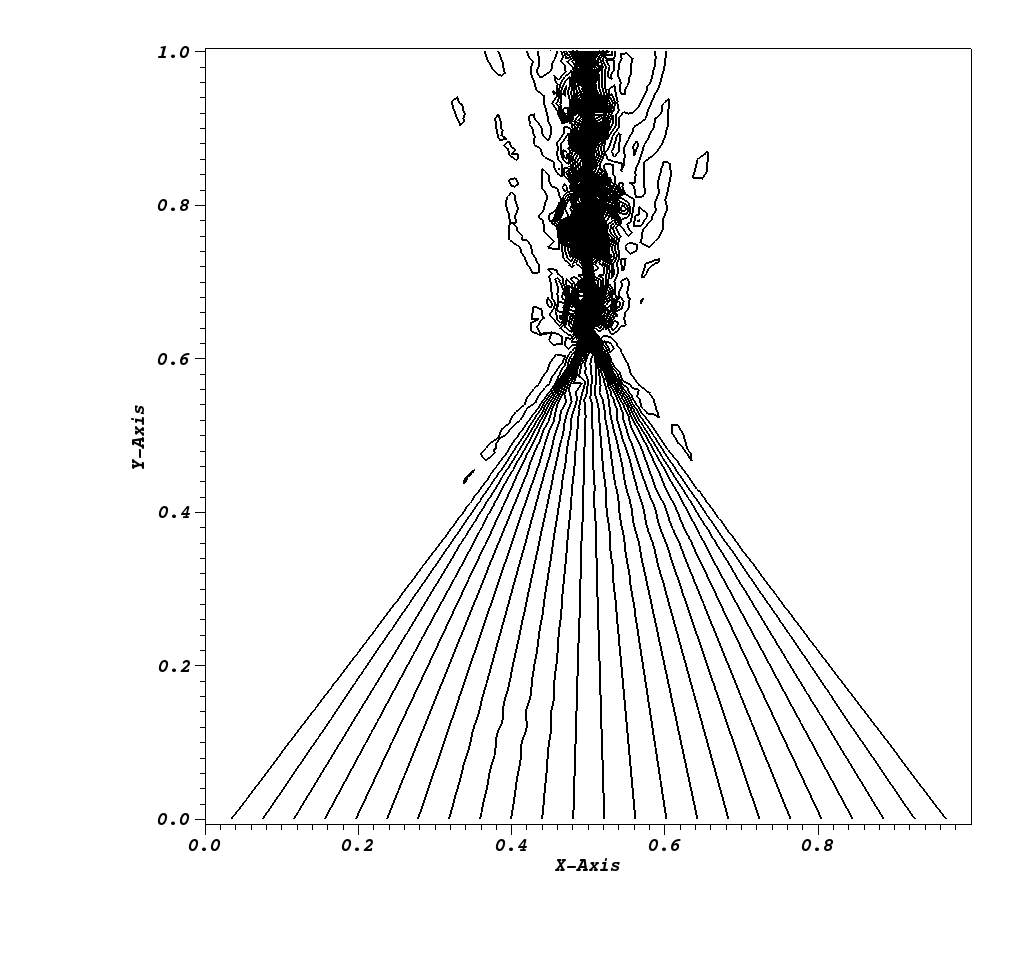}}
\subfigure[SUPG-E-$\theta=0.01$]{\includegraphics[width=0.45\textwidth]{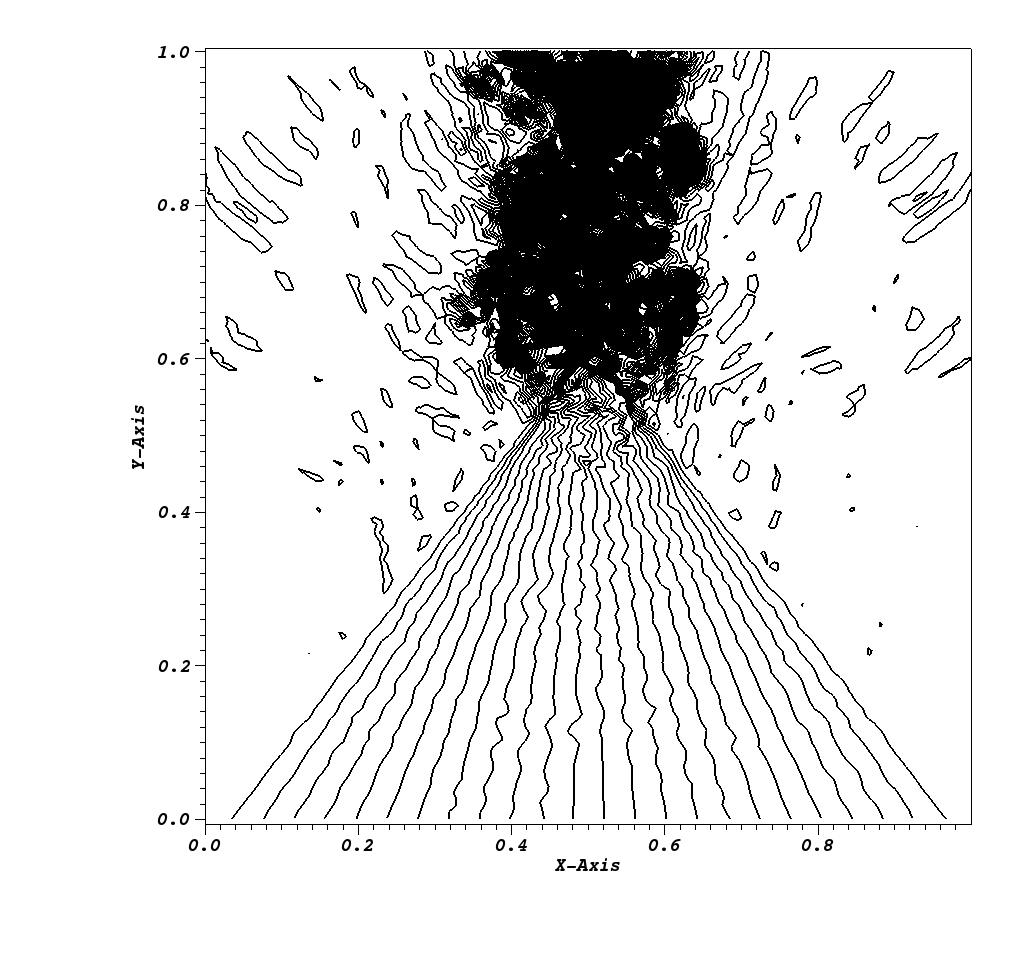}}
\subfigure[SUPG-E-$\theta=0.02$]{\includegraphics[width=0.45\textwidth]{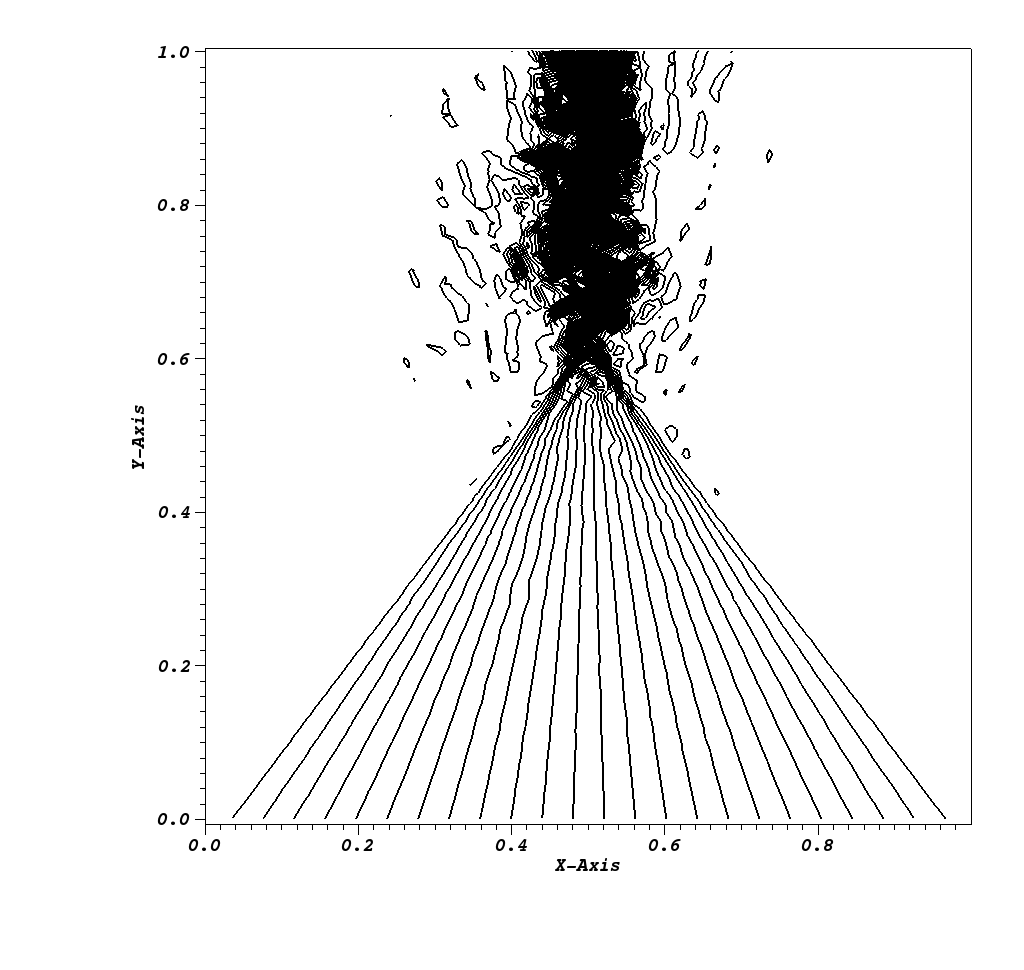}}
\subfigure[RDJ-E-$0.02$]{\includegraphics[width=0.45\textwidth]{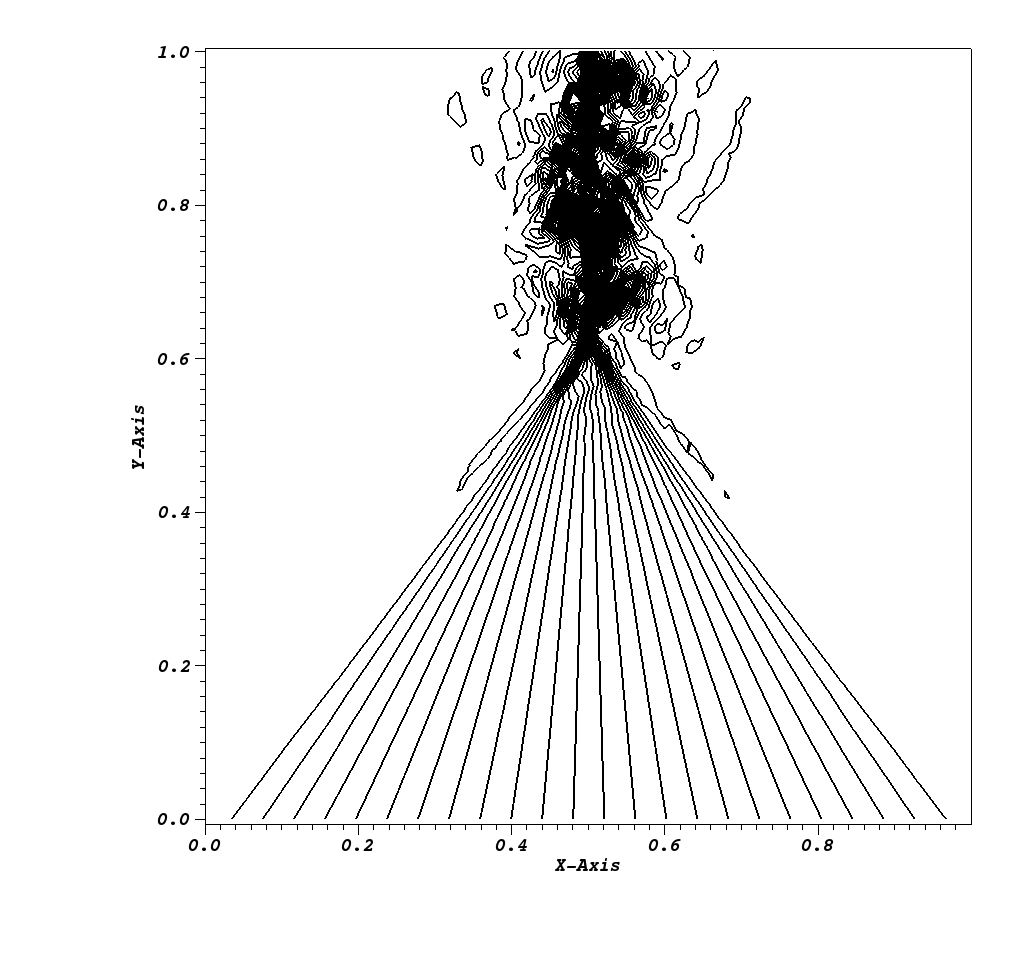}}
\end{center}
\caption{\label{with}Results with  with the entropy term}
\end{figure}
\clearpage

}

\section{Conclusion}
We have developed a general technique to guaranty that the discretisation of a conservative hyperbolic problem is consistent with an additional conservation relation. This is a generalisation of the work \cite{paola} and a sequel of \cite{abgrall:hal-01573592}.  This construction  uses  in depth a reinterpretation of conservative schemes as Residual distribution schemes. 

\red{Of course, the modifications will  depend on the problem. Aiming at illustrating this idea, this is why  we have focussed on an entropy equality and have constructed explicit modifications of the original discretisation that guaranty a consistency with the original PDE and the additional entropy inequality.  We also have shown that independently of the choice of the entropy flux, one lose one order of accuracy. However, in the case of discontinuous Galerkin schemes, and stabilized residual distribution schemes, we can provide an explicit form of the entropy flux that does not lead to a degradation of the accuracy. To achieve this goal, we do not need any particular structure of the quadrature formula, in particular they do not need to be exact nor to achieve the summation-by-part property. }

\red{We have tested the method on scalar problem, the formula we have given also work for systems. We have shown, numerically, at least for finite element like methods, that the optimal accuracy is kept. We have also checked that the method is globally conservative for the original conserved variable and the entropy. Using the results of \cite{abgrall:hal-01573592}, it is easy to see that, at least for the steady case, that these schemes can be reformulated as finite volume schemes. In the case of entropy stable scheme, we can get an \emph{explicit} form of the numerical flux for the conserved variable and the entropy.}

\red{We have also shown that adding this entropy correction can help to lower the amount of artificial diffusion that is needed to filter spurious modes for the Galerkin method and the non linear residual distribution ones. Our study also confirm that the stabilisation by jump is more efficient than the streamline diffusion one. One reason is probaly that the jump stabilisation allow inter-element exchange of informations, contrarily to the streamline one.}


\medskip

\red{The emphasis of this paper is put on the steady case, but the unsteady state is similar, see \cite{Mario} and \cite{Abgrall2017,HighFluid2018}.  
However, if  the unsteady case can be considered as well, this is to the price of having implicit formulations, in the spirit of \cite{paola}. This issue will be considered in future work. Future work will also deal with the question of how to have an entropy conservative scheme in smooth regions, and how to guaranty non oscillatory behaviour in discontinuous ones }

\bigskip

We conclude this work in noticing that one of the ingredients of the method is that any element has at least 2 vertices: the conservation relation translates by a linear relations on the residuals, and the modification by another one. Having more than 2 degrees of freedom, one solution is a priori possible. As a consequence, it could be  possible, using the same kind of technique, to guaranty more that one additional conservation relation, this will also be considered for future work. 
\section*{Acknowledgements}
The author has been funded in part  by the SNSF project 200021\_153604 "High fidelity simulation for compressible materials". I would also like to thanks Anne Burbeau (CEA-DEN) for her critical reading of the first draft of this paper. Her input has hopefully helped to improve the readability of this paper. 
I also warmly thanks the three anonymous referees whose comments and criticisms have, I hope, improved the original version of the paper. The remaining mistakes are mine.

\medskip

A personal note to end: though the author is curently editor in chief of this Journal, the EES system has been parametrised in such a way that I, of course,  have absolutely no access to the identity of the referees, nor to the identity of  the editor in charge of the paper. This is, of course, the only paper of this type. Hence I would like to thanks the Elsevier team for making this singular situation possible. 
\bibliographystyle{unsrt}
\bibliography{papier}
\appendix
\section{Construction of the non linear stabilisation for RD schemes}\label{RDS}
Here we consider a globally continuous approximation: $\bu^h \in \mathcal{V}_h\cap C^0(\Omega)$.

Consider one element $K$. Since there is no ambiguity, the drop, for the residuals,  any reference to $K$ in the following. The total residual is defined by
$$
\Phi(\bu^h)=\int_{\partial K} \bF(\bu^h)\cdot \bn \; d\gamma,$$
and we assume to have monotone residuals $\{\Phi_\sigma^L\}_{\sigma\in K}$. By this we mean 
$$\Phi_\sigma^L(\bu^h)=\sum\limits_{\sigma'\in K} c_{\sigma\sigma'}^L(\bu_\sigma-\bu_{\sigma'})$$
with $c_{\sigma\sigma'}\geq 0$ that also satisfies
$$\sum\limits_{\sigma\in K}\Phi_\sigma^L=\Phi.$$ It can easily be shown  that the condition $c_{\sigma\sigma'}\geq 0$ garanties that the scheme is monotone under a CFL like condition. One example is 
given by the Rusanov residuals:
$$\Phi_\sigma^{Rus}(\bu^h)=-\int_K \nabla \varphi_\sigma \cdot  \bF(\bu^h)\; d\bx+\int_{\partial K} \varphi_\sigma\bbf(\bu^h)\cdot \bn\; d\gamma+\alpha (\bu_\sigma-\bar \bu),
$$
where $\bar \bu$ is the arithmetic average of of the $\bu_\sigma's$ on $K$ and $\alpha$ satisfies:
$$\alpha\geq \#K\; \max_{\sigma, \sigma'\in K} \bigg | \int_K \varphi_\sigma \nabla_\bu\bF(\bu^h)\;  d\bx \bigg |.$$
Here $\#K$ is the number of degrees of freedom in $K$.
Indeed, this residual can be rewritten as 
$$\Phi_\sigma^{Rus}(\bu^h)=\sum_{\sigma'\in K} c_{\sigma\sigma'}(\bu_\sigma-\bu_{\sigma'})$$
with
$$c_{\sigma\sigma'}=\int_K \varphi_\sigma \cdot \nabla_\bu\bF(\bu^h)\; d\bx+\dfrac{\alpha}{\#K}.$$
Under the condition above, $c_{\sigma\sigma'}\geq 0$ and hence we have a maximum principle.

The coefficients $\beta_\sigma$ introduced in the relations \eqref{schema RDS SUPG} and \eqref{schema RDS jump} are defined by:
$$
\beta_\sigma=\dfrac{\max(0,\frac{\Phi_\sigma^{L}}{\Phi})}
{
\sum\limits_{\sigma'\in K} \max(0,\frac{\Phi_{\sigma'}^{L}}{\Phi})}.
$$ and can be shown to be always defined, to  guaranty a local maximum principle for \eqref{schema RDS SUPG} and \eqref{schema RDS jump}, see \cite{abgrallLarat}.

\begin{remark}[About the coefficients $c_{\sigma\sigma'}^L$]
All the examples of monotone residual we are aware of are such that for linear problems, te $c_{\sigma\sigma'}^L$ are independant of $\bu^h$. Then one can show that for any $\sigma\in K$,
$$\sum\limits_{\sigma'\in K}\big ( c_{\sigma\sigma'}^L-c_{\sigma'\sigma}^L\big )=\int_K \nabla\bbf(\bu^h)\cdot\nabla \varphi_\sigma d\bx.$$
This relation implies the conservation relation \eqref{conservation:K}.
\end{remark}
\section{Examples of finite volume writing of Residual distribution schemes}
\label{control}
{
This part is taken from \cite{abgrall:hal-01573592}.  As a motivation, we first show that any finite volume scheme can be written as  a residual distribution sccheme. Then we give two example of the converse. The general statement can be found in \cite{abgrall:hal-01573592}, it applies also to discontinuous representations, see the same reference

\subsection{Finite volume as Residual distribution schemes}
 The notations are defined in Figure \ref{fig:fv}.
\begin{figure}[h]
\begin{center}
\subfigure[]{\includegraphics[width=0.45\textwidth]{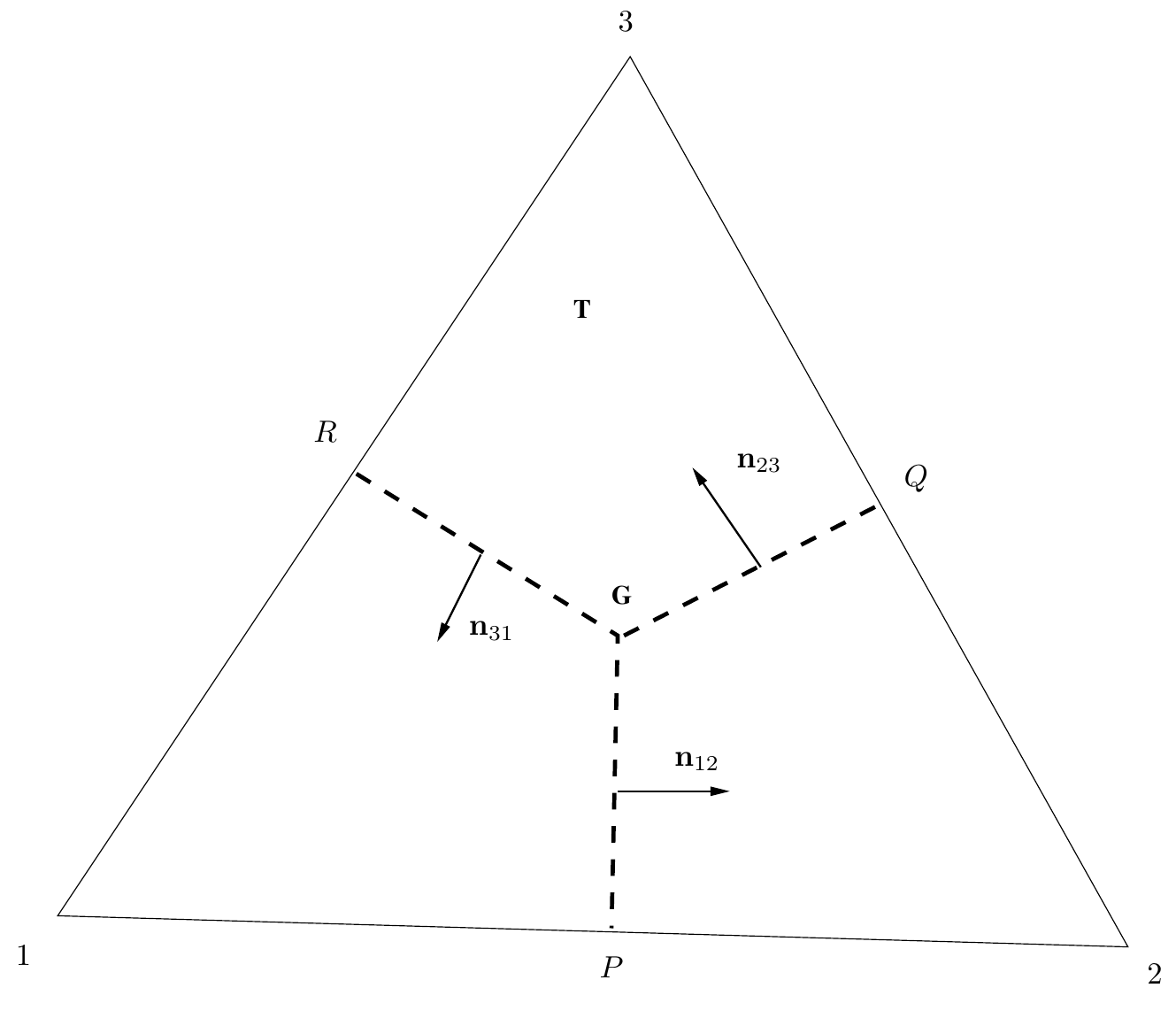}}
\subfigure[]{\includegraphics[width=0.45\textwidth]{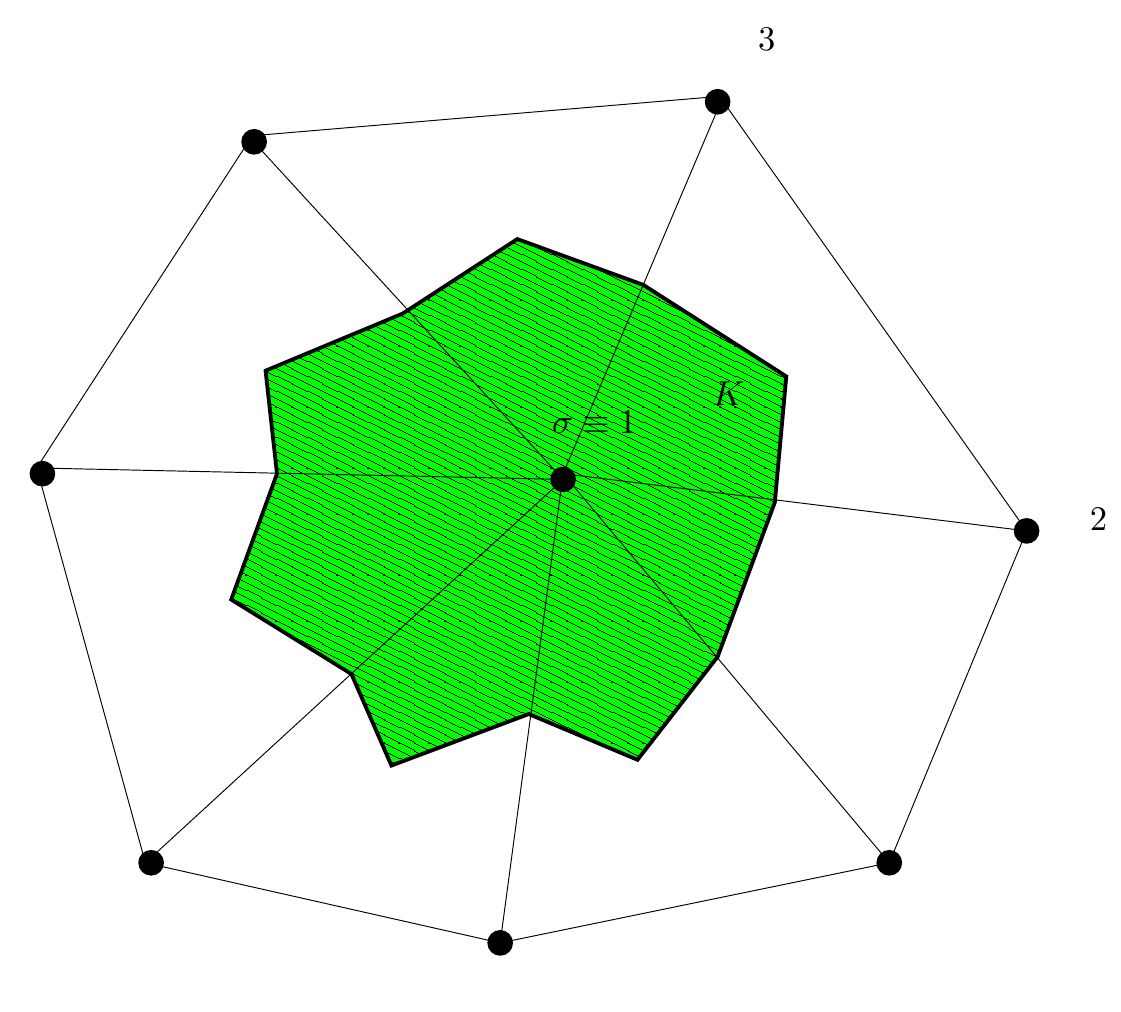}}
\end{center}
\caption{\label{fig:fv} Notations for the finite volume schemes. On the left: definition of the control volume for the degree of freedom $\sigma$.
 The vertex $\sigma$ plays the role of the vertex $1$ on the left picture for the triangle K. The control volume $C_\sigma$ associated to $\sigma=1$ is green on the right and corresponds to $1PGR$ on the left. The vectors $\bn_{ij}$ are normal to the internal edges scaled by the corresponding edge length}
\end{figure}
We specialize ourselves to the case of triangular elements, but  \emph{exactly the same arguments} can be given for more general elements,
 provided a conformal approximation space can be constructed. This is  the case for
triangle elements, and we can take $k=1$.

The control volumes in this case are defined as the median cell, see figure \ref{fig:fv}.
We concentrate on the  approximation of $\text{div } \bbf$, see equation \eqref{eq1}.   Since the boundary of $C_\sigma$ is a closed polygon, the scaled outward normals $\bn_\gamma$ to $\partial C_\sigma$ sum up to 0:
$$
\sum_{\gamma \subset \partial C_\sigma}\bn_\gamma=0$$
where $\gamma$ is any of the segment included in $\partial C_\sigma$, such as $PG$ on Figure \ref{fig:fv}. 
Hence
\begin{equation*}
\begin{split}
\sum_{\gamma \subset \partial C_\sigma} \hbbf_{\bn_\gamma }(\bu_\sigma , \bu_-& )= \sum_{\gamma \subset \partial C_\sigma} \hbbf_{\bn_\gamma }(\bu_\sigma, \bu_- )- \bigg (\sum_{\gamma \subset \partial C_\sigma}\bn_\gamma\bigg )\cdot \bbf (\bu_\sigma)\\
&=\sum\limits_{K, \sigma\in K} \sum\limits_{\gamma \subset \partial C_\sigma\cap K} \big ( \hbbf_{\bn_\gamma }(\bu_\sigma, \bu_- )-\bbf (\bu_\sigma)\cdot \bn_\gamma \big )
\end{split}
\end{equation*}
To make things explicit, in $K$, the internal boundaries are $PG$, $QG$ and $RG$, and those around $\sigma\equiv 1$ are $PG$ and $RG$.
We set
\begin{equation}
\begin{split}
\Phi_\sigma^K(\bu^h)&=\sum\limits_{\gamma\subset \partial C_\sigma\cap K} \big ( \hbbf_{\bn_\gamma }(\bu_\sigma, \bu_- )-\bbf (\bu_\sigma)\cdot \bn_\gamma \big )\\
&=\sum\limits_{\gamma\subset \partial ( C_\sigma\cap K )}  \hbbf_{\bn_\gamma }(\bu_\sigma, \bu_- ).
\end{split}
\label{fv:res:sigma}
\end{equation}
The last relation uses the consistency of the flux and the fact that $C_\sigma\cap K$ is a closed polygon. The quantity $\Phi_\sigma^K(\bu^h)$ is the normal flux on $C_\sigma\cap K$.
If now we sum up these three quantities and get:
\begin{equation*}
\begin{split}
\sum_{\sigma\in K} \Phi_\sigma^K(\bu_h)&= \bigg ( \hbbf_{\bn_{12}}(\bu_1,\bu_2)-\hbbf_{\bn_{13}}(\bu_1,\bu_3)-\bbf(\bu_1)\cdot\bn_{12}+\bbf(\bu_1)\cdot\bn_{31}\bigg )\\
&+\bigg ( \hbbf_{\bn_{23}}(\bu_2,\bu_3)-\hbbf_{\bn_{12}}(\bu_2,\bu_1)+\bbf(\bu_2)\cdot\bn_{12}-\bbf(\bu_2)\cdot\bn_{23}\bigg )\\
&+\bigg ( -\hbbf_{\bn_{23}}(\bu_3,\bu_2)+\hbbf_{\bn_{31}}(\bu_3,\bu_1)-\bbf(\bu_3)\cdot\bn_{23}+\bbf(\bu_3)\cdot\bn_{31}\bigg )\\
&= \bbf(\bu_1)\cdot \big ( \bn_{12}-\bn_{31}\big ) +\bbf(\bu_2)\cdot \big ( -\bn_{23}+\bn_{31}\big )
+\bbf(\bu_3)\cdot \big ( \bn_{31}-\bn_{23}\big )\\
&=\bbf(\bu_1)\cdot\frac{\bn_1}{2}+\bbf(\bu_2)\cdot\frac{\bn_2}{2}+\bbf(\bu_3)\cdot\frac{\bn_3}{2}
\end{split}
\end{equation*}
where $\bn_j$ is the scaled inward normal of the edge opposite to vertex $\sigma_j$, i.e. twice the gradient of the $\PP^1$ basis function
 $\varphi_{\sigma_j}$ associated to this degree of freedom.
Thus, we can reinterpret the sum as the boundary integral of the Lagrange interpolant of the flux.
The finite volume scheme is then a residual distribution scheme with residual defined by \eqref{fv:res:sigma}
and a total residual defined by
\begin{equation}
\label{fv:tot:residu}
\Phi^K:=\int_{\partial K} \bbf^h\cdot \bn , \qquad \bbf^h=\sum_{\sigma\in K} \bbf(\bu_\sigma)\varphi_\sigma.
\end{equation}

\begin{figure}[h]
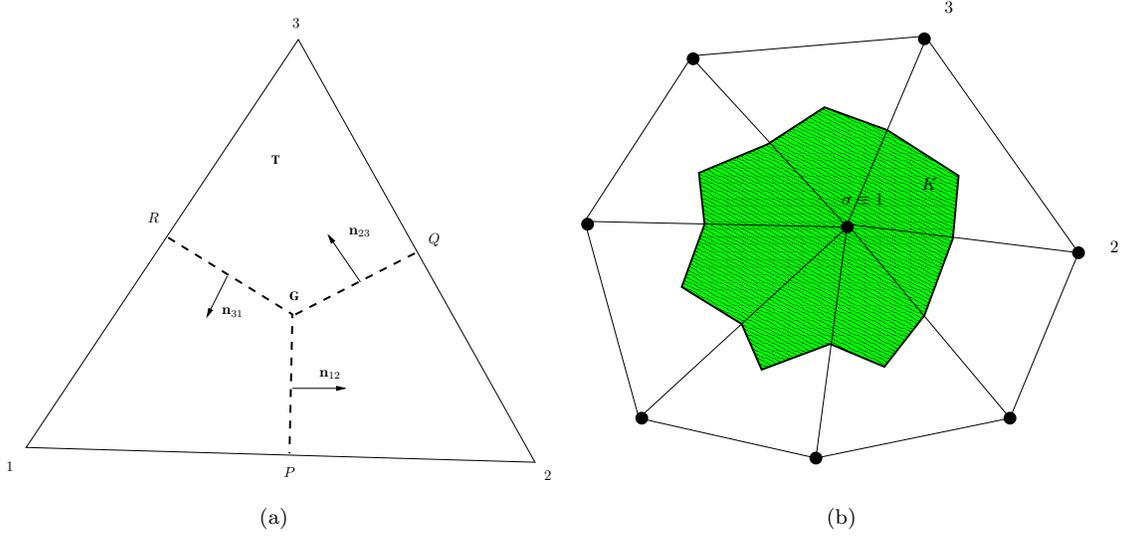

\begin{center}
\subfigure[]{\includegraphics[width=0.45\textwidth]{Figs/triangleP1_bon.pdf}}
\subfigure[]{\includegraphics[width=0.45\textwidth]{Figs/controlfv_bon.pdf}}
\end{center}
\caption{\label{fig:fv} Notations for the finite volume schemes. On the left: definition of the control volume for the degree of freedom $\sigma$.
 The vertex $\sigma$ plays the role of the vertex $1$ on the left picture for the triangle K. The control volume $C_\sigma$ associated to $\sigma=1$ is green on the right and corresponds to $1PGR$ on the left. The vectors $\bn_{ij}$ are normal to the internal edges scaled by the corresponding edge length}
\end{figure}

Let $K$ be a fixed triangle. We are given a set of residues $\{\Phi_\sigma^K\}_{\sigma\in K}$, our aim here is to define a
 flux function such that relations similar to \eqref{fv:res:sigma} hold true. We explicitly give the formula 
for $\PP^1$ and $\PP^2$ interpolant.

\subsection{General case}
One can deal with the general case, i.e when $K$ is a polytope contained in $\R^d$ with degrees of freedoms on the boundary of $K$. The set $\mathcal{S}$ is the set of degrees of freedom. We consider a triangulation $\mathcal{T}_K$ of $K$ whose vertices  are exactly the elements of $\mathcal{S}$. Choosing an orientation of $K$, it is propagated on $\mathcal{T}_K$: the edges are oriented.

The problem is to find quantities $\hbbf_{\bn_{\sigma,\sigma'}}$ for any edge $[\sigma,\sigma']$ of   $\mathcal{T}_K$ such that:
\begin{subequations}\label{GC:1}
\begin{equation}
\label{GC:1.1}
\Phi_\sigma=\sum_{\text{ edges }[\sigma,\sigma']} \hbbf_{\bn_{\sigma,\sigma'}}+\hbbf_\sigma^{b}
\end{equation}
with
\begin{equation}
\hbbf_{\bn_{\sigma,\sigma'}}=-\hbbf_{\bn_{\sigma',\sigma}}
\label{GC:1.2}
\end{equation}
and
\begin{equation}
\label{BC:1.3}
\hbbf_\sigma^b=\oint_{\partial K} \varphi_\sigma\; \hbbf_\bn (\bu^h,\bu^{h}_-) \; d\gamma.
\end{equation}
{The control volumes will be defined by their normals so that we get consistency.} The discontinuous nature of $\bu^h$, if any, is handled through the numerical flux $\hbbf_\bn$.

Note that \eqref{GC:1.2} implies the conservation relation
\begin{equation}
\label{GC:conservation}
\sum\limits_{\sigma\in K}\Phi_\sigma=\sum\limits_{\sigma\in K}\hbbf_\sigma^b,
\end{equation}
so that we rewrite \eqref{GC:1.1}, introducint $\Psi_\sigma=\Phi_\sigma-\hbbf_\sigma^b$ as:
\begin{equation}
\label{GC:1.1.bis}
\Psi_\sigma=\sum_{\text{ edges }[\sigma,\sigma']} \hbbf_{\bn_{\sigma,\sigma'}}
\end{equation}
\end{subequations}

\paragraph{The general  $\PP^1$ case.}
A solution is 
\begin{equation*}
\begin{split}
\hbbf_{\bn_{13}}&=\frac{\Psi_1-\Psi_3}{3} \\ 
\hbbf_{\bn_{23}}&=\frac{\Psi_2-\Psi_3}{3} \\ 
\hbbf_{\bn_{32}}&=\frac{\Psi_3-\Psi_2}{3}
\end{split}\end{equation*}

In order to describe the control volumes, we first have to make precise the normals $\bn_\sigma$ in that case. It is easy to see that in all the cases described above, we have 
$$\normal_\sigma=-\frac{\bn_\sigma}{2}.$$ Then a short calculation shows that
$$\begin{pmatrix}
\bn_{12} \\ \bn_{23} \\ \bn_{31} \end{pmatrix}=
\frac{1}{6}\begin{pmatrix} \bn_1-\bn_2 \\ \bn_2 -\bn_3 \\ \bn_3 -\bn_1 \end{pmatrix}.
$$
Using elementary geometry of the triangle, we see that these  are the normals of the elements of the dual mesh. For example, the normal $\bn_{12}$ is the normal of 
 $PG$, see figure \ref{fig:fv}.
 
 Relying more on the geometrical interpretation (once we know the control volumes), we can recover the same formula by elementary calculations, see \cite{icm}.

\paragraph{The general example of the $\PP^2$ approximation. }
Using a similar method, we get (see figure \ref{fig:P2} for some notations):
$$
\begin{array}{lcl}
\hbbf_{14}&=&\dfrac{1}{12}\big (\Psi_1-\Psi_4\big )+\dfrac{1}{36}\big ( \Psi_6-\Psi_5\big )+\dfrac {7}{36}\big (\Psi_1- \Psi_2\big )+\dfrac {5 }{36}\big (\Psi_3-\Psi_1\big )\\
&\\
\hbbf_{16}&=&\dfrac{1}{12}\big ( \Psi_4-\Psi_1\big )+\dfrac {5}{36}\big ( \Psi_5-\Psi_1)
+\dfrac {7}{36}\big ( \Psi_6-\Psi_1\big ) +\dfrac{1}{36}\big ( \Psi_3- \Psi_2\big )\\
&\\
\hbbf_{46}&=&\dfrac{2}{9}\big (\Psi_2-\Psi_6\big )+\dfrac{1}{9}\big (  \Psi_3- \Psi_5\big )\\
&\\
\hbbf_{54}&=&\dfrac{2}{9}\big (  \Psi_5-\Psi_2\big )+\dfrac{1}{9}\big ( \Psi_5-\Psi_1\big )\\
\end{array}
$$
$$
\begin{array}{lcl}
\hbbf_{42}&=&\dfrac {7}{36}\big (\Psi_2-\Psi_3\big ) +\dfrac{5}{36}\big (\Psi_1-\Psi_3\big )+\dfrac{1}{12}\big(\Psi_6-\Psi_3\big )+\dfrac{1}{36}\big (\Psi_5-\Psi_4\big ) \\

&\\
\hbbf_{25}&=&\dfrac{1}{36}\big (\Psi_2-\Psi_1\big )+\dfrac{5}{36}\big (\Psi_3-\Psi_5\big ) +\dfrac{7}{36}\big ( \Psi_3-\Psi_5\big )+\dfrac{1}{12}\big (\Psi_3-\Psi_6\big )  \\

&\\
\hbbf_{53}&=&\dfrac{1}{36}\big (\Psi_1-\Psi_6\big )+\dfrac{5}{36}\big (\Psi_3-\Psi_5\big )+\dfrac{7}{36}\big (\Psi_4-\Psi_5\big )+\dfrac{1}{12}\big (\Psi_2-\Psi_5\big )
\\
&\\
\hbbf_{63}&=& \dfrac{1}{36}\big (\Psi_4-\Psi_3\big )+\dfrac{5}{36}\big (\Psi_5-\Psi_1\big )+\dfrac{7}{36}\big (\Psi_5-\Psi_6\big )+\dfrac{1}{12}\big (\Psi_5-\Psi_2\big )\\
&\\

\hbbf_{65}&=&\dfrac{1}{9}\big (\Psi_1- \Psi_3\big )+\dfrac{2}{9}\big ( \Psi_6- \Psi_4\big )\end {array}
$$
Then we choose the boundary flux:
$$\hbbf_\sigma^b=\int_{\partial K}\varphi_\sigma\bn\; d\gamma$$ and get:
$$
\begin{array}{lll}
\normal_l=-\dfrac{\bn_l}{6} & \text{if }l=1,2,3\\ &&\\
\normal_4=\dfrac{\bn_3}{3}& \normal_5=\dfrac{\bn_1}{3}& \normal_6=\dfrac{\bn_2}{3}
\end{array}
$$
The normals are given by:
$$
\begin{array}{lcl}
\bn_{14}&=&\dfrac{1}{12}\big (\normal_1-\normal_4\big )+\dfrac{1}{36}\big ( \normal_6-\normal_5\big )+\dfrac {7}{36}\big (\normal_1- \normal_2\big )+\dfrac {5 }{36}\big (\normal_3-\normal_1\big )\\
&\\
\bn_{16}&=&\dfrac{1}{12}\big ( \normal_4-\normal_1\big )+\dfrac {5}{36}\big ( \normal_5-\normal_1)
+\dfrac {7}{36}\big ( \normal_6-\normal_1\big ) +\dfrac{1}{36}\big ( \normal_3- \normal_2\big )\\
&\\

\bn_{46}&=&\dfrac{2}{9}\big (\normal_2-\normal_6\big )+\dfrac{1}{9}\big (  \normal_3- \normal_5\big )\\
&\\
\bn_{54}&=&\dfrac{2}{9}\big (  \normal_5-\normal_2\big )+\dfrac{1}{9}\big ( \normal_5-\normal_1\big )
\end{array}
$$
$$
\begin{array}{lcl}
\bn_{42}&=&\dfrac {7}{36}\big (\normal_2-\normal_3\big ) +\dfrac{5}{36}\big (\normal_1-\normal_3\big )+\dfrac{1}{12}\big(\normal_6-\normal_3\big )+\dfrac{1}{36}\big (\normal_5-\normal_4\big ) \\

&\\
\bn_{25}&=&\dfrac{1}{36}\big (\normal_2-\normal_1\big )+\dfrac{5}{36}\big (\normal_3-\normal_5\big ) +\dfrac{7}{36}\big ( \normal_3-\normal_5\big )+\dfrac{1}{12}\big (\normal_3-\normal_6\big )  \\

&\\
\bn_{53}&=&\dfrac{1}{36}\big (\normal_1-\normal_6\big )+\dfrac{5}{36}\big (\normal_3-\normal_5\big )+\dfrac{7}{36}\big (\normal_4-\normal_5\big )+\dfrac{1}{12}\big (\normal_2-\normal_5\big )
\\
&\\
\bn_{63}&=& \dfrac{1}{36}\big (\normal_4-\normal_3\big )+\dfrac{5}{36}\big (\normal_5-\normal_1\big )+\dfrac{7}{36}\big (\normal_5-\normal_6\big )+\dfrac{1}{12}\big (\normal_5-\normal_2\big )\\
&\\

\bn_{65}&=&\dfrac{1}{9}\big (\normal_1- \normal_3\big )+\dfrac{2}{9}\big ( \normal_6- \normal_4\big )\end {array}
$$

\bigskip

There is not uniqueness, and it is possible to construct different solutions to the problem. In what follows, we show another possible construction. 
We consider the set-up defined by Figure \ref{fig:P2}.
\begin{figure}[h!]
\begin{center}
\includegraphics[width=0.45\textwidth]{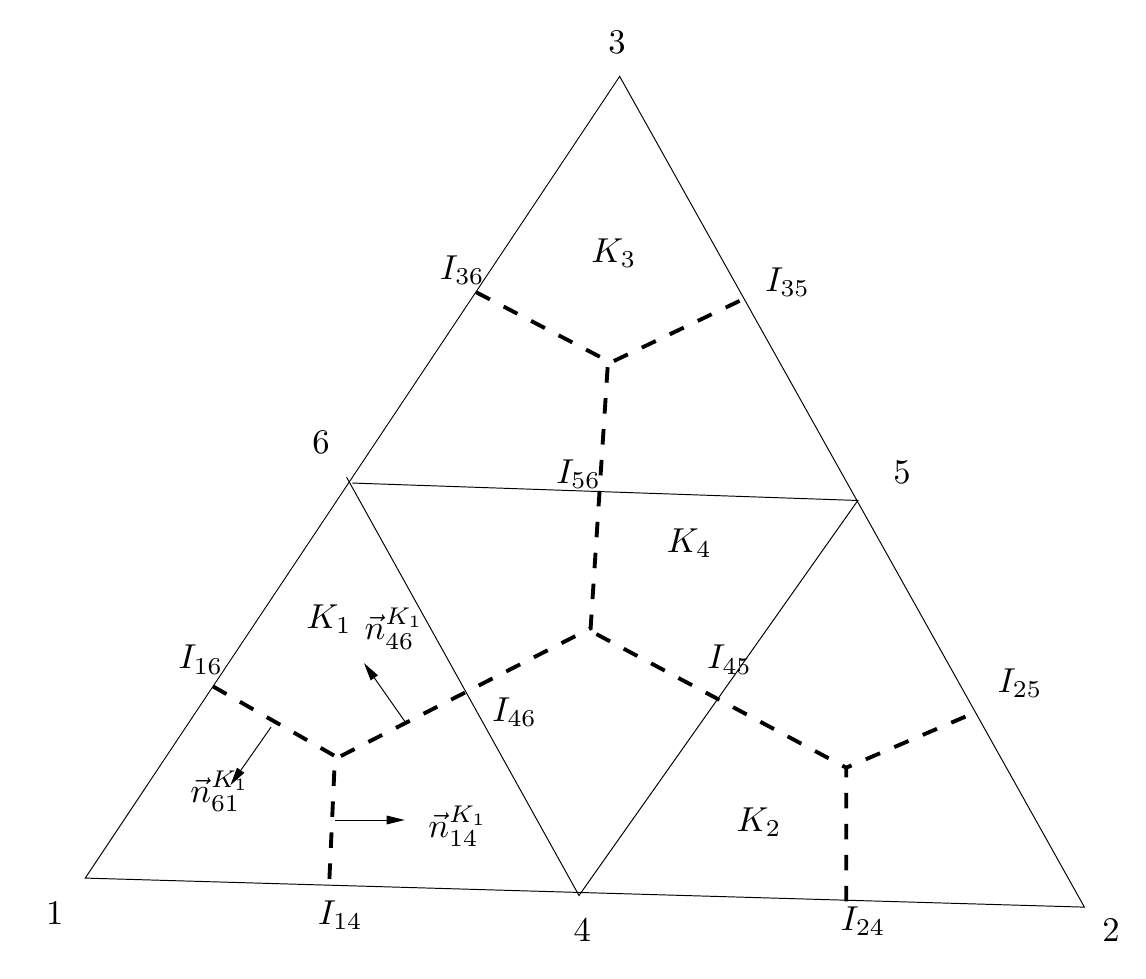}
\end{center}
\caption{\label{fig:P2} Geometrical elements for the $\PP^2$ case. $I_{ij}$ is the mid-point between the vertices $i$ and $j$. The intersections of the dotted lines are the centroids of the sub-elements.}
\end{figure}
}
\section{Proof of \eqref{algebre2}.}\label{proof}
\begin{proof}[Proof of \eqref{algebre2}]
We start from \eqref{RD:scheme} which is multiplied by $\bv_\sigma$, and these relations are added for each $\sigma\in \mathcal{S}$. We get:
$$
0=\sum\limits_{\sigma\in \mathcal{S}}\bv_\sigma\bigg ( \sum\limits_{K\subset \Omega, \sigma\in K} \Phi_\sigma^K(\bu^h)+\sum\limits_{\Gamma\subset\partial \Omega, \sigma\in \Gamma}\Phi_\sigma^\Gamma(\bu^h,\bu_b) \bigg ).
$$
Permuting the sums on $\sigma$ and $K$, then on $\sigma$ and $\Gamma$, we get:
$$
0=\sum\limits_{K\subset \Omega} \bigg ( \sum\limits_{\sigma\in K} \bv_\sigma\Phi_\sigma^K(\bu^h)\bigg ) + \sum\limits_{\Gamma\subset\partial \Omega}\bigg ( \sum\limits_{\sigma\in \Gamma} \bv_\sigma\Phi_\sigma^\Gamma(\bu^h,\bu_b)\bigg ) .$$
We look at the first term, the second is done similarly.
We have, introducing $\Phi_\sigma^{K, Gal}$ and $\#K$ the number of degrees of freedom in $K$,
\begin{equation*}
\begin{split}
\sum\limits_{\sigma\in K} \bv_\sigma\Phi_\sigma^K(\bu^h)&=\sum\limits_{\sigma\in K} \bv_\sigma \Phi_\sigma^{K, Gal}(\bu^h) +\sum\limits_{\sigma\in K} \bv_\sigma\bigg ( \Phi_\sigma^K(\bu^h)-\Phi_\sigma^{K, Gal}(\bu^h) \bigg )\\
&= -\oint_K \nabla \bv_h \cdot \bbf(\bu^h) \; d\bx +\oint_{\partial K} \bv^h \hbbf_\bn(\bu^h_\Kp, \bu^{h}_\Km)\; d\gamma+ \sum\limits_{\sigma\in K} \bv_\sigma\bigg ( \Phi_\sigma^K(\bu^h)-\Phi_\sigma^{K, Gal}(\bu^h) \bigg )\\
&=-\oint_K \nabla \bv_h \cdot \bbf(\bu^h) \; d\bx +\oint_{\partial K} \bv^h \hbbf_\bn(\bu^h_\Kp, \bu^{h}_\Km)\; d\gamma\\
&\qquad \qquad \qquad+ \frac{1}{\#K}\sum\limits_{\sigma,\sigma'\in K} (\bv_\sigma-\bv_{\sigma'})\bigg ( \Phi_\sigma^K(\bu^h)-\Phi_\sigma^{K, Gal}(\bu^h) \bigg )
\end{split}
\end{equation*}
because from \eqref{conservation:K},  $$\sum\limits_{\sigma\in K} \big ( \Phi_\sigma^K(\bu^h)-\Phi_\sigma^{K, Gal}(\bu^h) \big )=0.$$

Similarly, we have
\begin{equation*}
\begin{split}
\sum\limits_{\sigma\in \Gamma } \bv_\sigma\Phi_\sigma^\Gamma(\bu^h)&=\oint_{\Gamma}\bv^h \big (\hat{\bbf}_\bn(\bu^h,\bu_b)-\bbf(\bu^h)\cdot \bn\big ) \; d\gamma+\sum\frac{1}{\#\Gamma}\sum\limits_{\sigma,\sigma'\in \Gamma} (\bv_\sigma-\bv_{\sigma'})(\Phi_\sigma^\Gamma \big (\bu^h,\bu_b)-\Phi_\sigma^{Gal,\Gamma}(\bu^h,\bu_b)\big )
\end{split}
\end{equation*}
Adding all the relations and using the assumption \ref{assump:quad} on the quadrature formulas, we get:
\begin{equation*}
\begin{split}
0&= \sum\limits_{K\subset \Omega} \bigg ( -\oint_K \nabla \bv_h \cdot \bbf(\bu^h) \; d\bx +\oint_{\partial K} \bv^h \hbbf_\bn(\bu^h_\Kp, \bu^{h}_\Km)\; d\gamma\bigg ) +\sum\limits_{\Gamma\subset \partial \Omega}\oint_{\Gamma}\bv^h \big (\hat{\bbf}_\bn(\bu^h,\bu_b)-\bbf(\bu^h)\cdot \bn\big ) \; d\gamma\\
& \qquad  +\sum\limits_{K\subset \Omega}\frac{1}{\#K}\bigg ( \sum\limits_{\sigma,\sigma'\in K} (\bv_\sigma-\bv_{\sigma'})\bigg ( \Phi_\sigma^K(\bu^h)-\Phi_\sigma^{K, Gal}(\bu^h) \bigg ) \bigg )\\
&\qquad \qquad
+\sum\limits_{\Gamma\subset \partial \Omega} \frac{1}{\#\Gamma}\bigg ( \sum\limits_{\sigma,\sigma'\in \Gamma} (\bv_\sigma-\bv_{\sigma'})(\Phi_\sigma^\Gamma \big (\bu^h,\bu_b)-\Phi_\sigma^{Gal,\Gamma}(\bu^h,\bu_b)\big )\bigg )
\end{split}
\end{equation*}
i.e. after having defined $[\bv^h]= \bv^h_\Kp-\bv_\Km^{h}$ and chosen one orientation of the internal edges $e\in \mathcal{E}_h$,  we get \eqref{algebre2}.
\end{proof}

\begin{proof}[Proof of Proposition \ref{prop:approx}]
We first show that $\Phi_\sigma^{K, Gal}(\pi_h(\bu))=\mathcal{O}\bigl(h^{k+d}\bigr)$. Since $\bu$ is regular enough, we have pointwise $\text{ div } \bbf(\bu) =0$ on $K$, so that, by consistency of the flux, 
$$
0=-\int_K \nabla \varphi\cdot \bbf(\bu)\; d\bx+\int_{\partial K} \varphi \hbbf_\bn(\bu, \bu)\; d\gamma.
$$
This shows that 
$$-\oint_K \nabla \varphi\cdot \bbf(\bu)\; d\bx+\oint_{\partial K} \varphi \hbbf_\bn(\bu, \bu)\; d\gamma=O(h^{k+d+1}).$$
Then,
\begin{equation*}
\begin{split}
\Phi_\sigma^{K, Gal}(\pi_h(\bu))&=-\oint_K\nabla\varphi_\sigma\cdot \big (\bbf(\pi_h(\bu))-\bbf(\bu)\big ) \; d\bx +\oint_{\partial K}\varphi_\sigma\big ( \hbbf_\bn(\pi_h(\bu), \pi_h(\bu))-\hbbf_\bn(\bu, \bu)\big )\; d\gamma\\
&\qquad \qquad \qquad+\oint_K \nabla \varphi\cdot \bbf(\bu)\; d\bx-\oint_{\partial K} \varphi \hbbf_\bn(\bu, \bu)\; d\gamma \\
& = |K| \times \mathcal{O}(h^{-1}) \times \mathcal{O}(h^{k+1}) + |\partial K|\times  \mathcal{O}(1 )\mathcal{O}(h^{k+1})\\
&  \qquad \qquad \qquad +O(h^{k+d+1})\\
&=\mathcal{O}(h^{d})\times \mathcal{O}(h^{-1}) \times \mathcal{O}(h^{k+1})+\mathcal{O}(h^{d-1})\times  \mathcal{O}(1 )\times\mathcal{O}(h^{k+1})\\
& \qquad \qquad \qquad + O(h^{k+d+1})\\
&=\mathcal{O}(h^{d+k})
\end{split}
\end{equation*}
because the flux is Lipschitz continuous and the mesh is regular.

The result on the boundary term is similar since  the boundary numerical flux is upwind and the boundary of $\Omega$ is not characteristic: only two types of boundary faces exists, the upwind and downwind ones. On the downwind faces, the boundary flux vanishes. On the upwind ones, we get the estimate  for the Galerkin boundary residuals thanks to the  same approximation argument.

The mesh is assumed to be regular: the number of elements (resp. edges) is $O(h^{-d})$ (resp. $O(h^{-d+1})$). 
Let us assume \eqref{eq: residual accuracy}. Let $\bv\in C_0^1(\overline{\Omega})$. Using \eqref{algebre2} for $\pi_h(\bu)$,\begin{equation*}
\begin{split}
\mathcal{E}\bigl(\bu^h, \varphi\bigr) =&  -\oint_\Omega \nabla \pi_h(v) \cdot \bbf(\pi_h(\bu)) \; d\bx  +\oint_{\partial \Omega}\pi_h(v) \big (\hat{\bbf}_\bn(\pi_h(\bu),\bu_b)-\bbf(\pi_h(\bu))\cdot \bn\big ) \; d\gamma\\
&\qquad +\sum\limits_{e\in \mathcal{E}_h} \oint_e[\pi_h(v)]\hbbf_\bn(\pi_h(\bu),\pi_h(\bu)_{-})\; d\gamma+\sum\limits_{K\subset \Omega}\frac{1}{\#K}\bigg ( \sum\limits_{\sigma,\sigma'\in K} (\bv_\sigma-\bv_{\sigma'})\bigg ( \Phi_\sigma^K(\pi_h(\bu))-\Phi_\sigma^{K, Gal}(\pi_h(\bu)) \bigg ) \bigg )\\
&\qquad \qquad
+\sum\limits_{\Gamma\subset \partial \Omega} \frac{1}{\#\Gamma}\bigg ( \sum\limits_{\sigma,\sigma'\in \Gamma} (\bv_\sigma-\bv_{\sigma'})(\Phi_\sigma^\Gamma \big (\pi_h(\bu),\bu_b)-\Phi_\sigma^{Gal,\Gamma}(\pi_h(\bu),\bu_b)\big )\bigg ).
\end{split}
\end{equation*}
 where $\pi_h(\bu)_{-}$ represents the interpolant of $\bu$ on $K^-$.

We have, using 
$$-\oint_\Omega \nabla \pi_h(v)\cdot \bbf(\bu)\; d\bx+\oint_{\partial \Omega} \pi_h(v) \big (\mathcal{F}_\bn(\bu,\bu_b)-\bbf(\bu)\cdot \bn\big ) \; d\gamma=0,$$
\begin{equation*}
\begin{split}
-\oint_\Omega \nabla \pi_h(v) &\cdot \bbf(\pi_h(\bu)) \; d\bx  +\oint_{\partial \Omega}\pi_h(v) \big (\hat{\bbf}_\bn(\pi_h(\bu),\bu_b)-\bbf(\pi_h(\bu))\cdot \bn\big ) \; d\gamma\\&=
-\oint_\Omega \nabla \pi_h(v) \cdot \big ( \bbf(\pi_h(\bu))-\bbf(\bu)\big ) \; d\bx
+\oint_{\partial \Omega}\pi_h(v) \big ( \hbbf_\bn(\pi_h(\bu),\bu_b)-\hbbf_\bn(\bu,\bu_b)\big )\; d\gamma\\&\qquad \qquad - \oint_{\partial \Omega}\pi_h(v)\big( \bbf(\pi_h(\bu))-\bbf(\bu) \big )\cdot \bn \; d\gamma\\
&=\mathcal{O}(h^{k+1})
\end{split}
\end{equation*}
since the flux on the boundary is the upwind flux $\mathcal{F}_\bn$, and using  the approximation properties of $\pi_h(\bu)$.

Then
$$\sum\limits_{e\in \mathcal{E}_h} \oint_e[\pi_h(v)]\hbbf_\bn(\pi_h(\bu),\pi_h(\bu)^{-})\; d\gamma=\mathcal{O}(h^{-d+1}) \times \mathcal{O}(h^{d-1})\times  \mathcal{O}(h^{k+1})\times \mathcal{O}(1)=\mathcal{O}(h^{k+1}),$$
$$\sum\limits_{K\subset \Omega}\frac{1}{\#K}\bigg ( \sum\limits_{\sigma,\sigma'\in K} (\bv_\sigma-\bv_{\sigma'})\bigg ( \Phi_\sigma^K(\pi_h(\bu))-\Phi_\sigma^{K, Gal}(\pi_h(\bu)) \bigg ) \bigg )= \mathcal{O}(h^{-d})\times  \mathcal{O}(h) \times  \mathcal{O}(h^{k+d})=\mathcal{O}(h^{k+1}),$$
and similarly
$$\sum\limits_{\Gamma\subset \partial \Omega} \frac{1}{\#\Gamma}\bigg ( \sum\limits_{\sigma,\sigma'\in \Gamma} (\bv_\sigma-\bv_{\sigma'})(\Phi_\sigma^\Gamma \big (\pi_h(\bu),\bu_b)-\Phi_\sigma^{Gal,\Gamma}(\pi_h(\bu),\bu_b)\big )\bigg )=\mathcal{O}(h^{-d+1})\times\mathcal{O}(h) \times  \mathcal{O}(h^{k+d-1})=\mathcal{O}(h^{k+1})$$
thanks to the regularity of the mesh, that $\pi_h(v)$ is the interpolant of a $C^1$ function and the previous estimates.
\end{proof}

\end{document}